\newenvironment{customthm}[1]
  {\innercustomthm}
  {\endinnercustomthm}
\newenvironment{customcor}[1]
  {\innercustomcor}
  {\endinnercustomcor}
\newtheorem*{thm*}{Theorem}
\newtheorem{thm}{Theorem}
\newtheorem{lem}[thm]{Lemma}
\newtheorem{pro}[thm]{Proposition}
\newtheorem{cor}[thm]{Corollary}
\newtheorem{ques}[thm]{Question}
\newcommand{\N}{\mathbb{N}}
\newcommand{\col}{\mathrm{col}}
\begin{document}

\title{List Coloring a Cartesian Product with a Complete Bipartite Factor}

\author{Hemanshu Kaul\footnote{Department of Applied Mathematics, Illinois Institute of Technology, Chicago, IL 60616. E-mail: {\tt kaul@iit.edu}} \\
Jeffrey A. Mudrock\footnote{Department of Mathematics, College of Lake County, Grayslake, IL 60030. E-mail: {\tt jmudrock@clcillinois.edu}} }

\maketitle

\begin{abstract}

We study the list chromatic number of the Cartesian product of any graph $G$ and a complete bipartite graph with partite sets of size $a$ and $b$, denoted $\chi_\ell(G \square K_{a,b})$. We have two motivations. A classic result on the gap between list chromatic number and the chromatic number tells us $\chi_\ell(K_{a,b}) = 1 + a$ if and only if $b \geq a^a$.  Since $\chi_\ell(K_{a,b}) \leq 1 + a$ for any $b \in \N$, this result tells us the values of $b$ for which $\chi_\ell(K_{a,b})$ is as large as possible and far from $\chi(K_{a,b})=2$. In this paper we seek to understand when $\chi_\ell(G \square K_{a,b})$ is far from $\chi(G \square K_{a,b}) = \max \{\chi(G), 2 \}$.  It is easy to show $\chi_\ell(G \square K_{a,b}) \leq \chi_\ell (G) + a$. In 2006, Borowiecki, Jendrol, Kr{\'a}l, and Mi{\v s}kuf showed that this bound is attainable if $b$ is sufficiently large; specifically, $\chi_\ell(G \square K_{a,b}) = \chi_\ell (G) + a$ whenever $b \geq (\chi_\ell(G) + a - 1)^{a|V(G)|}$.  Given any graph $G$ and $a \in \N$, we wish to determine the smallest $b$ such that $\chi_\ell(G \square K_{a,b}) = \chi_\ell (G) + a$. In this paper we show that the list color function, a list analogue of the chromatic polynomial, provides the right concept and tool for making progress on this problem.  Using the list color function, we prove a general improvement on Borowiecki et al.'s 2006 result, and we compute the smallest such $b$ exactly for some large families of chromatic-choosable graphs.

\medskip

\noindent {\bf Keywords.}  graph coloring, list coloring, Cartesian product, list color function, chromatic choosable.

\noindent \textbf{Mathematics Subject Classification.} 05C15.

\end{abstract}

\section{Introduction}\label{intro}

In this paper all graphs are nonempty, finite, simple graphs unless otherwise noted.  Generally speaking we follow West~\cite{W01} for terminology and notation.  The set of natural numbers is $\N = \{1,2,3, \ldots \}$.  For $k \in \N$, we write $[k]$ for the set $\{1,2, \ldots, k \}$.  If $G$ is a graph and $S \subseteq V(G)$, we write $G[S]$ for the subgraph of $G$ induced by $S$.  For $v \in V(G)$, we write $d_G(v)$ for the degree of vertex $v$ in the graph $G$.  If $G$ and $H$ are vertex disjoint graphs, we write $G \vee H$ for the join of $G$ and $H$.

\subsection{List Coloring}

List coloring is a variation on the classic vertex coloring problem.  It was introduced in the 1970's independently by Vizing~\cite{V76} and Erd\H{o}s, Rubin, and Taylor~\cite{ET79}.  In the classic vertex coloring problem we seek a \emph{proper $k$-coloring} of a graph $G$ which is a coloring of the vertices of $G$ with colors from $[k]$ such that adjacent vertices receive different colors. The \emph{chromatic number} of a graph, denoted $\chi(G)$, is the smallest $k$ such that $G$ has a proper $k$-coloring.  For list coloring, we associate a \emph{list assignment}, $L$, with a graph $G$ such that each vertex $v \in V(G)$ is assigned a list of colors $L(v)$ (we say $L$ is a list assignment for $G$).  The graph $G$ is \emph{$L$-colorable} if there exists a proper coloring $f$ of $G$ such that $f(v) \in L(v)$ for each $v \in V(G)$ (we refer to $f$ as a \emph{proper $L$-coloring} of $G$).  A list assignment $L$ is called a \emph{k-assignment} for $G$ if $|L(v)|=k$ for each $v \in V(G)$.  The \emph{list chromatic number} of a graph $G$, denoted $\chi_\ell(G)$, is the smallest $k$ such that $G$ is $L$-colorable whenever $L$ is a $k$-assignment for $G$.  We say $G$ is \emph{$k$-choosable} if $k \geq \chi_\ell(G)$.

It is immediately obvious that for any graph $G$, $\chi(G) \leq \chi_\ell(G)$.  Both Vizing~\cite{V76} and Erd\H{o}s, Rubin, and Taylor~\cite{ET79} observed bipartite graphs can have arbitrarily large list chromatic number.  This implies the gap between $\chi(G)$ and $\chi_\ell(G)$ can be arbitrarily large.  The following result illustrates this.

\begin{thm} \label{thm: listbipartite}
For $a,b \in \N$, $\chi_\ell(K_{a,b})=a+1$ if and only if $b \geq a^a$.
\end{thm}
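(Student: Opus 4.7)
The plan is to first establish the universal upper bound $\chi_\ell(K_{a,b}) \le a+1$, and then prove the two directions of the equivalence via an explicit construction (for sufficiency) and a counting argument (for necessity).

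For the upper bound, let $L$ be an $(a+1)$-assignment on $K_{a,b}$ with parts $A$ (of size $a$) and $B$. I would assign each $u \in A$ an arbitrary color from $L(u)$; this uses at most $a$ colors total. Each $v \in B$ then has a list of size $a+1$ from which at most $a$ colors are forbidden by its neighbors in $A$, so at least one color remains. Hence $\chi_\ell(K_{a,b}) \le a+1$ for every $b$, and the real content of the theorem is to determine when equality holds.

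For sufficiency ($b \ge a^a \Rightarrow \chi_\ell(K_{a,b}) = a+1$), I would exhibit a bad $a$-assignment on $K_{a,a^a}$. Write $A = \{u_1, \dots, u_a\}$ and assign the $u_i$ pairwise disjoint $a$-lists $L(u_i) = \{c_{i,1}, \dots, c_{i,a}\}$. Index the $a^a$ vertices of $B$ by the functions $f : [a] \to [a]$, and set $L(v_f) = \{c_{i,f(i)} : i \in [a]\}$. In any proper $L$-coloring of $A$, vertex $u_i$ receives some $c_{i,f(i)}$; the resulting $f$ makes $L(v_f)$ exactly the set of colors used on $A$, leaving no available color for $v_f$.

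For necessity ($\chi_\ell(K_{a,b}) \ge a+1 \Rightarrow b \ge a^a$), suppose $L$ is an $a$-assignment on $K_{a,b}$ admitting no proper $L$-coloring. Since $A$ is independent, every tuple $(c_1,\dots,c_a) \in L(u_1)\times\cdots\times L(u_a)$ is a proper coloring of $A$, so for $L$ to be bad, each such tuple must be blocked by some $v \in B$ with $L(v) \subseteq \{c_1,\dots,c_a\}$. The crucial step, which I expect to be the main obstacle, is the observation that the $L(u_i)$ must be pairwise disjoint: otherwise, picking $c \in L(u_i) \cap L(u_j)$ and setting $c_i = c_j = c$ yields a tuple whose color set has size less than $a$, which no $a$-element list $L(v)$ can sit inside, contradicting badness. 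Given pairwise disjointness, the $a^a$ tuples correspond bijectively to $a^a$ distinct $a$-element subsets of colors, and each such subset must arise as $L(v)$ for a distinct blocker $v \in B$, forcing $b \ge a^a$.
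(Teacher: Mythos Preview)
The paper does not actually prove this theorem; it is stated as a folklore result (attributed loosely to Vizing and to Erd\H{o}s, Rubin, and Taylor) and used as motivation, with no proof given. Your argument is the standard correct proof, and each step is sound: the greedy upper bound, the explicit bad $a$-assignment using pairwise disjoint lists on $A$ and all transversals on $B$, and the necessity argument via forced disjointness of the $A$-lists followed by a count of blockers.

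It is worth noting that your proof anticipates, in the simplest case $G=K_1$, exactly the machinery the paper later develops for general $G$. Your sufficiency construction is the $G=K_1$ specialization of the construction in the proof of Theorem~\ref{thm: generalupper}, and your disjointness observation is the $G=K_1$ instance of Lemma~\ref{lem: disjointlists}. The paper's lower-bound arguments (Lemma~\ref{lem: scclower} and Theorem~\ref{thm: scclower}) would likewise collapse to your blocker count when $k=1$, though the paper explicitly restricts Section~\ref{scc} to $k\ge 2$ and simply quotes $f_a(K_1)=a^a$ as known. So while there is no proof in the paper to compare against directly, your argument is fully consistent with, and indeed a template for, the paper's general approach.
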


It is worth mentioning that Theorem~\ref{thm: listbipartite} is often attributed to Vizing~\cite{V76} or Erd\H{o}s, Rubin, and Taylor~\cite{ET79}, but it is not stated in those papers. It is best described as a folklore result.  In this paper we prove results similar in flavor to Theorem~\ref{thm: listbipartite} for Cartesian products.

Graphs in which $\chi(G) = \chi_\ell(G)$ are known as \emph{chromatic-choosable} graphs~\cite{O02}.  The notion of chromatic-choosability has received considerable attention in the literature.  Many families of graphs have been conjectured to be chromatic-choosable (see for example~\cite{BKW97}, \cite{GM97}, and~\cite{HC92}), and there are several families of graphs that have been shown to be chromatic-choosable (see for example~\cite{G95}, \cite{KM18}, \cite{NR15}, \cite{PW03}, and~\cite{TV96}).  We are studying how Cartesian products with a complete bipartite factor can be far from being chromatic-choosable.

\subsection{List Coloring Cartesian Products}

The \emph{Cartesian product} of graphs $M$ and $H$, denoted $M \square H$, is the graph with vertex set $V(M) \times V(H)$ and edges created so that $(u,v)$ is adjacent to $(u',v')$ if and only if either $u=u'$ and $vv' \in E(H)$ or $v=v'$ and $uu' \in E(M)$.  Throughout this paper, if $G = M \square H$ and $u \in V(M)$ (resp. $u \in V(H)$), we let $V_u$ be the subset of $V(G)$ consisting of the vertices with first (resp. second) coordinate $u$.  By the definition of Cartesian product of graphs, it is easy to see $G[V_u]$ is a copy of $H$ (resp. $M$), and we refer to $G[V_u]$ as the \emph{copy of $H$ (resp. $M$) corresponding to $u$}.

It is well known that $\chi(G \square H) = \max \{\chi(G), \chi(H) \}$.  On the other hand, the list chromatic number of the Cartesian product of graphs is not nearly as well understood.  In 2006, Borowiecki, Jendrol, Kr{\'a}l, and Mi{\v s}kuf~\cite{BJ06} showed the following.

\begin{thm}[\cite{BJ06}] \label{thm: Borow1}
For any graphs $G$ and $H$, $\chi_\ell(G \square H) \leq \min \{\chi_\ell(G) + \col(H), \col(G) + \chi_\ell(H) \} - 1.$
\end{thm}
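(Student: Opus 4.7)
The plan is to prove the bound $\chi_\ell(G \square H) \leq \chi_\ell(G) + \col(H) - 1$; the other inequality then follows immediately by symmetry since $G \square H \cong H \square G$.

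Set $d = \col(H) - 1$. By the standard characterization of the coloring number, there is an ordering $v_1, v_2, \ldots, v_n$ of $V(H)$ such that $|\{j < i : v_j v_i \in E(H)\}| \leq d$ for every $i$. Let $k = \chi_\ell(G) + d$, let $L$ be an arbitrary $k$-assignment for $G \square H$, and let $V_{v_i}$ denote the copy of $G$ corresponding to $v_i$. I would color the copies $V_{v_1}, V_{v_2}, \ldots, V_{v_n}$ one at a time in this order, producing a proper $L$-coloring $f$ along the way.

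Suppose inductively that $f$ has been defined on $V_{v_1} \cup \cdots \cup V_{v_{i-1}}$. For each $u \in V(G)$, the already-colored neighbors of $(u, v_i)$ are precisely the vertices $(u, v_j)$ with $j < i$ and $v_j v_i \in E(H)$, since edges between distinct copies $V_{v_i}$ and $V_{v_j}$ form a perfect matching identified by the first coordinate. There are at most $d$ such neighbors, so the reduced list $L_i(u) = L((u,v_i)) \setminus \{f((u,v_j)) : j < i,\ v_j v_i \in E(H)\}$ satisfies $|L_i(u)| \geq k - d = \chi_\ell(G)$. Because $G[V_{v_i}]$ is a copy of $G$, the definition of $\chi_\ell(G)$ produces a proper $L_i$-coloring of $V_{v_i}$, and extending $f$ by this coloring preserves properness: within $V_{v_i}$ by choice of $L_i$-coloring, and to earlier copies by construction of $L_i$. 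After processing all $i$, we obtain a proper $L$-coloring of $G \square H$, so $\chi_\ell(G \square H) \leq k$.

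No serious obstacle is expected; the argument is routine list-coloring bookkeeping. The only conceptual point worth flagging is that we must measure list shrinkage using $\col(H)$ (via the back-degree in a fixed ordering) rather than via the maximum degree of $H$, which is precisely what makes the bound involve $\col(H)$ and not something weaker like $\Delta(H)+1$.
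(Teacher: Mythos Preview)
Your argument is correct and is the standard greedy proof of this bound: fix a degeneracy ordering of $V(H)$, color the copies of $G$ in that order, and observe that each vertex loses at most $\col(H)-1$ colors to already-colored neighbors, leaving lists of size at least $\chi_\ell(G)$ on every copy of $G$. The one small point worth making explicit is that the reduced lists $L_i$ need not all have size exactly $\chi_\ell(G)$, but truncating to that size (or simply noting that larger lists can only help) disposes of this immediately.

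As for comparison with the paper: the paper does not give a proof of this theorem at all. It is quoted from~\cite{BJ06} as a known result and used as background. Your proof is essentially the argument given in that reference, so there is nothing further to compare.
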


Here $\col(G)$, the \emph{coloring number} of a graph $G$, is the smallest integer $d$ for which there exists an ordering, $v_1, v_2, \ldots, v_n$, of the elements in $V(G)$ such that each vertex $v_i$ has at most $d-1$ neighbors among $v_1, v_2, \ldots, v_{i-1}$.  For any graph $G$, it is easy to see that Theorem~\ref{thm: Borow1} implies $\chi_\ell(G \square K_{a,b}) \leq \chi_\ell(G) + a$.  In proving that the bound in Theorem~\ref{thm: Borow1} is tight, Borowiecki, Jendrol, Kr{\'a}l, and Mi{\v s}kuf also proved the following.

\begin{thm}[\cite{BJ06}] \label{thm: Borow2}
Suppose $G$ is a graph with $n$ vertices.  Then, $\chi_\ell(G \square K_{a,b}) = \chi_\ell(G) + a$ whenever $b \geq (\chi_\ell(G) + a - 1)^{an}$.
\end{thm}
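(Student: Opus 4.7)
My plan is to establish the lower bound $\chi_\ell(G \square K_{a,b}) \geq \chi_\ell(G) + a$ by exhibiting a bad $(\chi_\ell(G) + a - 1)$-assignment; combined with the upper bound $\chi_\ell(G \square K_{a,b}) \leq \chi_\ell(G) + a$ noted right after Theorem~\ref{thm: Borow1}, this yields the desired equality. Write $k = \chi_\ell(G)$, fix a bad $(k-1)$-assignment $L_0$ for $G$ (i.e., $|L_0(u)| = k-1$ for every $u \in V(G)$ and $G$ has no proper $L_0$-coloring), and set $U_0 = \bigcup_{u \in V(G)} L_0(u)$. Denote the partite sets of $K_{a,b}$ by $A = \{a_1, \ldots, a_a\}$ and $B = \{b_1, \ldots, b_b\}$.

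The core idea, in the spirit of the folklore proof of Theorem~\ref{thm: listbipartite}, is to use constant lists on each copy of $G$ indexed by $A$. I would pick pairwise disjoint sets $A_1, \ldots, A_a$ of size $k+a-1$, each disjoint from $U_0$, and set $L(u, a_i) = A_i$ for every $u \in V(G)$ and $i \in [a]$. Because each $G_{a_i}$ gets a constant $(k+a-1)$-list and the copies $G_{a_1}, \ldots, G_{a_a}$ are pairwise non-adjacent in $G \square K_{a,b}$, the number of proper $L$-colorings of $G_{a_1} \cup \cdots \cup G_{a_a}$ is at most $((k+a-1)^{n})^{a} = (k+a-1)^{an} \leq b$. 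Enumerate these colorings as $f^{(1)}, \ldots, f^{(N)}$, and for each $j \leq N$ and each $u \in V(G)$ set $L(u, b_j) = L_0(u) \cup \{f^{(j)}(u, a_i) : i \in [a]\}$ (any leftover $b_j$ gets an arbitrary $(k+a-1)$-list). Disjointness of the $A_i$'s and of each $A_i$ from $U_0$ makes $|L(u, b_j)| = (k-1) + a = k+a-1$, as required.

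To close the argument I would assume for contradiction that $F$ is a proper $L$-coloring of $G \square K_{a,b}$. Its restriction to the $A$-copies equals some $f^{(j)}$, and in the copy $G_{b_j}$ each vertex $(u, b_j)$ is adjacent to $(u, a_1), \ldots, (u, a_a)$, whose $F$-colors $f^{(j)}(u, a_1), \ldots, f^{(j)}(u, a_a)$ consume the entire set $L(u, b_j) \setminus L_0(u)$. Hence $F$ restricted to $G_{b_j}$ would be a proper $L_0$-coloring of $G$, contradicting the choice of $L_0$.

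There is essentially no hidden obstacle here: the only real design choice is making the $A_i$'s \emph{constant-per-copy} (the list at $(u, a_i)$ does not depend on $u$) and pairwise disjoint, since this is what simultaneously (i) caps the count of proper colorings of the $A$-side at $(k+a-1)^{an}$ and (ii) guarantees that the $a$ colors forbidden at each $(u, b_j)$ are distinct from one another and from $L_0(u)$, so that the residual list at $(u, b_j)$ collapses to exactly $L_0(u)$ and triggers the badness of $L_0$.
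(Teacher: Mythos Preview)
Your argument is correct. Note that the paper itself does not prove Theorem~\ref{thm: Borow2} (it is quoted from~\cite{BJ06}); instead it proves the sharper Theorem~\ref{thm: generalupper}, and that proof follows exactly the same skeleton as yours---disjoint color palettes on the $A$-copies, one $B$-copy dedicated to killing each proper coloring of the $A$-side via a bad $(k-1)$-assignment padded with the relevant $a$ colors, then Lemma~\ref{lem: badcolor}. The only difference is the choice of lists on the $A$-copies: you use \emph{constant} lists $A_i$, which bounds the number of colorings of each $G_{a_i}$ by the crude $(k+a-1)^n$ and recovers Theorem~\ref{thm: Borow2}; the paper instead picks a $(k+a-1)$-assignment on each $G_{a_i}$ realizing exactly $P_\ell(G,k+a-1)$ proper colorings, which is what yields the improvement to $(P_\ell(G,k+a-1))^a$ in Theorem~\ref{thm: generalupper}. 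Your ``constant-per-copy'' choice is thus a legitimate simplification tailored to the weaker statement, while the paper's choice is what buys the tighter bound.
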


One motivation for this paper was: For any given graph $G$, can we improve upon the bound on $b$ in Theorem~\ref{thm: Borow2}?  With this question in mind, for $a \in \N$, we let $f_a(G)$ be the smallest $b$ such that $\chi_\ell(G \square K_{a,b}) = \chi_\ell(G) + a$.  Along the lines of Theorem~\ref{thm: listbipartite}, computing $f_a(G)$ for some graph $G$ and $a \in \N$ yields: $\chi_\ell(G \square K_{a,b}) = \chi_\ell(G) + a$ if and only if $b \geq f_a(G)$. Theorem~\ref{thm: listbipartite} says that $f_a(K_1)= a^a$.

There are several other observations about $f_a(G)$ that are immediate.  First, $\chi_\ell(G \square K_{a,0}) = \chi_\ell(G) < \chi_\ell(G) + a$ which implies that $f_a(G) \geq 1$.  Second, Theorem~\ref{thm: Borow2} implies that $f_a(G) \leq (\chi_\ell(G) + a - 1)^{a|V(G)|}$.  This means $f_a(G)$ exists and is a natural number.  Also, if $G$ is a disconnected graph with components: $H_1, H_2, \ldots, H_r$, we have $f_a(G) = \max_{H_i, \chi_\ell(H_i)= \chi_\ell(G)} f_a(H_i)$.  So, we will restrict our attention to connected graphs.

In this paper we use a list analogue of the chromatic polynomial called the list color function to find an upper bound on $f_a(G)$ that is an improvement on the result of Theorem~\ref{thm: Borow2}.  We also present some further results on computing $f_a(G)$ in the special case where $G$ is a strongly chromatic-choosable graph.

\subsection{The List Color Function and Strong Chromatic-Choosability}

We let $P(G,k)$ be the \emph{chromatic polynomial} of the graph $G$; that is, $P(G,k)$ is equal to the number of proper $k$-colorings of $G$.  It can be shown that $P(G,k)$ is a polynomial in $k$ (see~\cite{B12}).  This notion was extended to list coloring as follows. If $L$ is a list assignment for $G$, we use $P(G,L)$ to denote the number of proper $L$-colorings of $G$. The \emph{list color function} $P_\ell(G,k)$ is the minimum value of $P(G,L)$ where the minimum is taken over all possible $k$-assignments $L$ for $G$.  Since a $k$-assignment could assign the same $k$ colors to every vertex in a graph, it is clear that $P_\ell(G,k) \leq P(G,k)$ for each $k \in \N$.  In general, the list color function can differ significantly from the chromatic polynomial for small values of $k$.  However, Wang, Qian, and Yan~\cite{WQ17} recently showed: If $G$ is a connected graph with $m$ edges, then $P_{\ell}(G,k)=P(G,k)$ whenever $k > \frac{m-1}{\ln(1+ \sqrt{2})}$.  Also see~\cite{AS90} and~\cite{T09} for earlier results on the list color function.

In the case $G$ is a complete graph or odd cycle, it is well known (see~\cite{R68}) that $P(C_{n},k)=(k-1)^{n}+(-1)^n(k-1)$ and $P(K_n,k) = \prod_{i=0}^{n-1} (k-i)$.  It is easy to see that for each $n,k \in \N$, $P(K_n,k)=P_{\ell}(K_n,k)$, and it was recently shown in~\cite{KN16} that for each $n,k \in \N$, $P(C_n,k) = P_{\ell}(C_n,k)$.

In~\cite{KM18} we introduced the notion of strong chromatic-choosability, and we used the list color function to exactly compute $f_1$ for graphs that are strongly chromatic-choosable.  Strong chromatic-choosability is a notion of criticality in the context of chromatic-choosability.  A graph $G$ is \emph{strong k-chromatic-choosable} if $\chi(G) = k$ and every $(k-1)$-assignment, $L$, for which $G$ is not $L$-colorable has the property that the lists are the same on all vertices \footnote{List assignments that assign the same list of colors to every vertex of a graph are called \emph{constant}.}.  We say $G$ is \emph{strongly chromatic-choosable} if it is strong $\chi(G)$-chromatic-choosable. Note that if $G$ is strong $k$-chromatic-choosable, then the only reason $G$ is not $(k-1)$-choosable is that a proper $(k-1)$-coloring of $G$ does not exist. Simple examples of strongly chromatic-choosable graphs include complete graphs, odd cycles, and the join of a complete graph and odd cycle (see~\cite{KM18} for many other examples). See Section~\ref{scc} for a summary of their properties, etc. The following is proven in~\cite{KM18}.

\begin{thm} [\cite{KM18}] \label{thm: star}
Let $M$ be a strong $k$-chromatic-choosable graph.  Then, $f_1(M) = P_\ell(M,k)$.
\end{thm}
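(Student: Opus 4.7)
The plan is to establish both inequalities $f_1(M) \leq P_\ell(M,k)$ and $f_1(M) \geq P_\ell(M,k)$. Since Theorem~\ref{thm: Borow1} yields $\chi_\ell(M \square K_{1,b}) \leq \chi_\ell(M) + \col(K_{1,b}) - 1 = k+1$ for every $b \in \N$, the whole problem reduces to deciding, for a given $b$, whether $M \square K_{1,b}$ is $k$-choosable. Throughout I will write $L_u$ and $L_{v_1},\ldots,L_{v_b}$ for the restrictions of a list assignment $L$ to the copies of $M$ sitting over the center $u$ and leaves $v_1,\ldots,v_b$ of $K_{1,b}$.

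For the upper bound $f_1(M) \leq P_\ell(M,k)$, I would take $b = P_\ell(M,k)$ and construct a $k$-assignment $L$ with no proper $L$-coloring. The construction is: pick $L_u$ realizing $P(M,L_u) = P_\ell(M,k)$, enumerate its proper $L_u$-colorings $c_1,\ldots,c_b$, fix $k-1$ fresh colors $\alpha_1,\ldots,\alpha_{k-1}$ disjoint from every list in $L_u$, and define $L_{v_j}(x) = \{c_j(x),\alpha_1,\ldots,\alpha_{k-1}\}$ on each leaf copy $V_{v_j}$. In any proper $L$-coloring, the restriction to $V_u$ is some $c_j$, which forces each $(x,v_j)$ into $L_{v_j}(x)\setminus\{c_j(x)\} = \{\alpha_1,\ldots,\alpha_{k-1}\}$ and hence yields a proper $(k-1)$-coloring of a copy of $M$, contradicting $\chi(M)=k$.

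For the lower bound $f_1(M) \geq P_\ell(M,k)$, take $b < P_\ell(M,k)$ and any $k$-assignment $L$. For each $i \in [b]$, set $B_i = \{c : L_{v_i}^c \text{ is not properly colorable}\}$ where $L_{v_i}^c(x) := L_{v_i}(x) \setminus \{c(x)\}$; any proper $L_u$-coloring $c$ not in $\bigcup_i B_i$ extends to a proper $L$-coloring of $M \square K_{1,b}$ by filling each leaf copy independently. Since $P(M, L_u) \geq P_\ell(M,k) > b$, it is enough to prove the main technical claim $|B_i| \leq 1$ for each $i$, and this is where strong $k$-chromatic-choosability will do the heavy lifting.

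The hard part is proving $|B_i| \leq 1$. Given $c \in B_i$, I would first show that $c(x) \in L_{v_i}(x)$ for every $x$: if $c(x_0) \notin L_{v_i}(x_0)$ then $|L_{v_i}^c(x_0)|=k$, and two distinct ways of shrinking this list to size $k-1$ give two non-colorable $(k-1)$-subassignments of $L_{v_i}^c$ that agree off $x_0$ but disagree at $x_0$, so strong $k$-chromatic-choosability forces both to be constant with a common value, a contradiction (the degenerate case $|V(M)|=1$ is trivial). Once this is ruled out, $L_{v_i}^c$ is a non-colorable $(k-1)$-assignment, hence constant by strong $k$-chromatic-choosability, so $L_{v_i}(x) = S^c \cup \{c(x)\}$ with $c(x)\notin S^c$ and $|S^c|=k-1$. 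If a second $c'\in B_i$ yields $L_{v_i}(x) = S^{c'}\cup\{c'(x)\}$, then either $S^c = S^{c'}$, forcing $c = c'$, or $S^c \neq S^{c'}$, in which case $c(x) \in S^{c'}$ for every $x$ and the image of $c$ lies in the $(k-1)$-set $S^{c'}$, contradicting the fact that any proper coloring of $M$ uses at least $\chi(M)=k$ colors. This closes the argument.
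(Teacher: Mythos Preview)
Your proof is correct and matches the paper's approach: the paper does not prove Theorem~\ref{thm: star} directly (it is quoted from \cite{KM18}) but recovers it as the $a=1$ case of Theorem~\ref{thm: sccexact}, whose proof uses Theorem~\ref{thm: generalupper} for the upper bound and Lemma~\ref{lem: badscc} for the key claim $|B_i|\le 1$, exactly as you do.

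One minor wrinkle in your write-up: when you argue that $c(x_0)\notin L_{v_i}(x_0)$ is impossible by shrinking $L_{v_i}^c(x_0)$ in two ways, the resulting assignments are genuine $(k-1)$-assignments only if every other list $L_{v_i}^c(y)$ already has size exactly $k-1$; if some other $y$ also has $c(y)\notin L_{v_i}(y)$, you must first shrink those lists too (making the same choice at each $y\neq x_0$ in both subassignments) before invoking strong $k$-chromatic-choosability. This is an easy fix and does not affect the logic. The paper sidesteps the issue entirely by citing Proposition~\ref{pro: obvious}(ii), which says directly that any non-colorable assignment with all lists of size at least $k-1$ must be a constant $(k-1)$-assignment; this immediately gives both $c(x)\in L_{v_i}(x)$ for all $x$ and the decomposition $L_{v_i}(x)=S^c\cup\{c(x)\}$ in one stroke.
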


We will generalize this result in Theorem~\ref{thm: sccexact}, as stated in the next section.

\subsection{Outline of Results and an Open Questions}

In this subsection we present an outline of the paper while stating our results and mentioning some open questions. Recall that  $f_a(G)$ is defined to be the smallest $b$ such that $\chi_\ell(G \square K_{a,b}) = \chi_\ell(G) + a$.

In Section~\ref{general}, we prove that for any graph $G$, $\chi_\ell(G \square K_{a,b}) = \chi_\ell (G) + a$ whenever $b \geq (P_\ell(G, \chi_\ell(G) + a - 1))^a$. Thus, starting with any chromatic-choosable $G$, and taking its Cartesian product with a sequence of appropriate $K_{a,b}$ (with $a = 0,1,2, \ldots$), we can construct a sequence of graphs that at each step get one farther from being chromatic-choosable: for any $s \ge t \ge 2$ there exists a graph $H$ with $\chi(H)=t$ and $\chi_\ell(H)=s$.

\begin{thm} \label{thm: generalupper}
For any graph $G$ and $a \in \N$, $f_a(G) \leq (P_\ell(G, \chi_\ell(G) + a - 1))^a$.
\end{thm}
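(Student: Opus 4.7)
\noindent\textbf{Proof plan for Theorem~\ref{thm: generalupper}.}

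Set $k = \chi_\ell(G) + a - 1$ and $N = P_\ell(G, k)$. The inequality $\chi_\ell(G \square K_{a,b}) \le \chi_\ell(G) + a$ was already noted as a consequence of Theorem~\ref{thm: Borow1}, so the task is to exhibit, for every $b \ge N^a$, a $k$-assignment $L$ on $G \square K_{a,b}$ with no proper $L$-coloring. My plan is to place the $a$ copies of $G$ sitting over the small side of $K_{a,b}$ in pairwise disjoint color pools; this lets me index $N^a$ vertices on the large side by tuples of proper colorings and force each such ``leaf'' copy to collapse to a fixed bad $(\chi_\ell(G)-1)$-assignment.

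Concretely, I fix pairwise disjoint color pools $P_1, \ldots, P_a, P^*$. For each $j \in [a]$ I pick a $k$-assignment $L^{(j)}$ of $G$ with lists in $P_j$ attaining $P(G, L^{(j)}) = N$, and enumerate its proper colorings as $\psi_1^{(j)}, \ldots, \psi_N^{(j)}$. Because $\chi_\ell(G) - 1 < \chi_\ell(G)$, I also fix a $(\chi_\ell(G)-1)$-assignment $L^*$ of $G$ with lists in $P^*$ and no proper $L^*$-coloring. Writing $A = \{u_1, \ldots, u_a\}$ for the size-$a$ part of $K_{a,b}$ and (since $b \ge N^a$) indexing $N^a$ vertices of the other part as $\{w_{\vec i} : \vec i \in [N]^a\}$, I define $L((x, u_j)) = L^{(j)}(x)$ and, for $\vec i = (i_1, \ldots, i_a)$,
\[
L((x, w_{\vec i})) \;=\; \{\psi_{i_1}^{(1)}(x), \ldots, \psi_{i_a}^{(a)}(x)\} \;\cup\; L^*(x).
\]
Disjointness of the pools makes each such list have exactly $a + (\chi_\ell(G)-1) = k$ elements; any remaining vertices of the large side receive an arbitrary $k$-assignment.

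To verify, suppose $f$ is a proper $L$-coloring of $G \square K_{a,b}$. Each restriction $f|_{V_{u_j}}$ is a proper $L^{(j)}$-coloring, so equals some $\psi_{i_j}^{(j)}$, producing a tuple $\vec i = (i_1, \ldots, i_a) \in [N]^a$. For every $x \in V(G)$, adjacency in $K_{a,b}$ forces $f((x, w_{\vec i}))$ to avoid each $\psi_{i_j}^{(j)}(x)$ while lying in $L((x, w_{\vec i}))$; the disjoint-pool design makes this available set exactly $L^*(x)$. Thus $f|_{V_{w_{\vec i}}}$ would be a proper $L^*$-coloring of $G$, contradicting the choice of $L^*$.

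The one delicate point is arranging the leaf lists so that after removing the at most $a$ adjacent $A$-colors, the residual list is exactly a pre-chosen uncolorable $(\chi_\ell(G)-1)$-assignment. The disjoint-pool trick is what makes this work: at every $x$ the $a$ values $\psi_{i_j}^{(j)}(x)$ are forced to be distinct and none of them lies in $L^*(x)$, so deleting them trims the leaf list down to precisely $L^*(x)$. A single shared color pool would allow the $\psi_{i_j}^{(j)}(x)$'s to coincide at some $x$, leaving strictly more than $\chi_\ell(G)-1$ colors available and potentially letting the leaf be properly colored.
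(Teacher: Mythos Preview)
Your proof is correct and follows essentially the same construction as the paper: disjoint color pools on the $a$ copies of $G$ over the small side so that the proper colorings of $H[\bigcup_j V_{u_j}]$ factor as $a$-tuples, and leaf lists built from a fixed uncolorable $(\chi_\ell(G)-1)$-assignment augmented by the tuple's values. The only cosmetic differences are that the paper proves the case $b = N^a$ (monotonicity in $b$ being implicit) and phrases the final contradiction via its Lemma~\ref{lem: badcolor}, whereas you argue it directly.
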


It is easy to see that if $G$ has at least one edge, then $P_\ell(G, \chi_\ell(G) + a - 1) < (\chi_\ell(G) + a - 1)^{|V(G)|}$.  This implies that Theorem~\ref{thm: generalupper} is an improvement on Theorem~\ref{thm: Borow2} whenever $G$ has an edge.  We will see many examples in Section~\ref{scc} that illustrate that the bound in Theorem~\ref{thm: generalupper} is tight (notice Theorem~\ref{thm: star} shows the bound is tight when $a=1$ and $G$ is strongly chromatic-choosable).  However, it is not the case that $f_a(G) = (P_\ell(G, \chi_\ell(G) + a - 1))^a$ for all graphs $G$ and $a \in \N$ since it is easy to see that $f_1(C_{2n})=1$, yet $P_\ell(C_{2n}, 2)=2$.  This observation leads us to the following open question.

\begin{ques} \label{ques: uppertight}
For what graphs does $f_a(G) = (P_\ell(G, \chi_\ell(G) + a - 1))^a$ for each $a \in \N$?
\end{ques}

It is possible to slightly modify the proof idea of Theorem~\ref{thm: generalupper} to obtain the following more general result.

\begin{thm} \label{thm: mostgeneral}
Suppose $H$ is a bipartite graph with partite sets $A$ and $B$ where $|A|=a$ and $|B|=b$.  Let $\delta = \min_{v \in B} d_H(v)$.  If $b \geq (P_\ell (G, \chi_\ell(G) + \delta - 1))^a$, then $\chi_\ell (G \square H) \geq \chi_\ell (G) + \delta$.
\end{thm}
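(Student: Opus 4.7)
The plan is to build a bad $k$-list assignment $L$ on $G \square H$ with $k = \chi_\ell(G) + \delta - 1$, generalizing the construction that would prove Theorem~\ref{thm: generalupper}: the role played there by the full neighborhood $A$ of each $b \in B$ in $K_{a,b}$ will here be played by a chosen $\delta$-element subset of $N_H(b)$. Let $p = P_\ell(G, k)$ and fix a $k$-assignment $L^*$ on $G$ attaining $P(G, L^*) = p$. For each $i \in [a]$, let $L^{(i)}$ be the $k$-assignment on the copy $V_{a_i}$ obtained from $L^*$ by relabeling colors into a set $S_i$, where $S_1, \ldots, S_a$ are pairwise disjoint, and set $L((v,a_i)) = L^{(i)}(v)$. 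Then any proper $L$-coloring restricted to $V_{a_i}$ is one of exactly $p$ specific colorings $\sigma^{(i)}_1,\ldots,\sigma^{(i)}_p$, so the combined $A$-coloring is indexed by a vector $\vec{j} = (j_1,\ldots,j_a) \in [p]^a$.

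Since $b \geq p^a$, there is an injection $\vec{j} \mapsto b_{\vec{j}}$ from $[p]^a$ into $B$. For each $b_\ell = b_{\vec{j}}$ fix a $\delta$-subset $N'_\ell \subseteq N_H(b_\ell)$ (which exists since $d_H(b_\ell) \geq \delta$), and let $I_\ell = \{i : a_i \in N'_\ell\}$. Fix a single bad $(\chi_\ell(G)-1)$-assignment $M$ on $G$ whose colors are disjoint from $\bigcup_i S_i$; this $M$ exists since $G$ is not $(\chi_\ell(G)-1)$-choosable, and we may reuse it across all $b_\ell$. Define
\[
L((v,b_\ell)) = M(v) \cup \{\sigma^{(i)}_{j_i}(v) : i \in I_\ell\},
\]
which has size $(\chi_\ell(G) - 1) + \delta = k$ because $M(v)$ is disjoint from each $S_i$ and the values $\sigma^{(i)}_{j_i}(v) \in S_i$ are pairwise distinct across $i$. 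Assign any $k$-list to the remaining vertices of $B$.

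To verify, suppose $c$ is a proper $L$-coloring of $G \square H$. Its restrictions to the copies $V_{a_i}$ pick out some $\vec{j} = (j_1,\ldots,j_a)$; consider $b_\ell = b_{\vec{j}}$. For each $a_i \in N'_\ell$ we have $c((v,a_i)) = \sigma^{(i)}_{j_i}(v) \in L((v,b_\ell))$, so the list available to $(v, b_\ell)$ is contained in $L((v,b_\ell)) \setminus \{\sigma^{(i)}_{j_i}(v) : i \in I_\ell\} = M(v)$, using the disjointness of $M$ from $\bigcup_i S_i$. Thus $c|_{V_{b_\ell}}$ would be a proper coloring of $G$ from lists contained in $M$, contradicting that $M$ is bad. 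Hence $\chi_\ell(G \square H) \geq k+1 = \chi_\ell(G) + \delta$.

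The one point beyond the complete-bipartite proof that needs care is handling $d_H(b_\ell) > \delta$: the extra neighbors in $N_H(b_\ell) \setminus N'_\ell$ could a priori remove further colors from $L((v,b_\ell))$. The disjointness setup rules this out cleanly, since for $i' \notin I_\ell$ the color $c((v,a_{i'})) \in S_{i'}$ lies outside $L((v,b_\ell)) \subseteq M(v) \cup \bigcup_{i \in I_\ell} S_i$, so removing it has no effect. This bookkeeping observation is the only substantive difference between this proof and the proof one would write for Theorem~\ref{thm: generalupper}.
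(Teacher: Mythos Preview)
Your proof is correct and follows essentially the same approach as the paper. The paper's sketch phrases the argument as first deleting edges from $H$ until every vertex of $B$ has degree exactly $\delta$, then applying the construction from Theorem~\ref{thm: generalupper} to $G \square H'$ and invoking the monotonicity $\chi_\ell(G \square H') \leq \chi_\ell(G \square H)$; your version leaves the extra edges in place and observes directly that the additional adjacencies only remove colors lying outside $L((v,b_\ell))$, which is an equivalent bookkeeping choice.
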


Notice that Theorem~\ref{thm: mostgeneral} gives us conditions on when $\chi_\ell(G \square H)$ is guaranteed to be far from $\chi(G \square H)=\max \{\chi(G),2\}$ for any bipartite graph $H$. Furthermore, notice that when $\delta = a$, $H = K_{a,b}$.  So, Theorem~\ref{thm: mostgeneral} implies Theorem~\ref{thm: generalupper}.

In Section~\ref{scc} we prove that if $M$ is a strong $k$-chromatic choosable graph and $k \geq a+1$, then $\chi_\ell(M \square K_{a,b}) = \chi_\ell (G) + a$ if and only if $b \geq (P_\ell(M, \chi_\ell(M) + a - 1))^a$. This is a generalization of Theorem~\ref{thm: star}.

\begin{thm} \label{thm: sccexact}
If $M$ is strongly chromatic-choosable and $\chi(M) \geq a + 1$, then $f_a(M) = (P_\ell(M, \chi_\ell(M) + a - 1))^a.$
\end{thm}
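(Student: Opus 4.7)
The upper bound $f_a(M) \leq (P_\ell(M, k+a-1))^a$, where $k := \chi(M) = \chi_\ell(M)$, is immediate from Theorem~\ref{thm: generalupper}. For the matching lower bound I plan to show that whenever $b < (P_\ell(M, k+a-1))^a$, every $(k+a-1)$-assignment $L$ for $G := M \square K_{a,b}$ admits a proper $L$-coloring. The case $a = 1$ is precisely Theorem~\ref{thm: star}, so I focus on $a \geq 2$. Write $A, B$ for the partite sets of $K_{a,b}$ and, for each $x \in A \cup B$, write $L_x$ for the restriction of $L$ to the copy $V_x$ of $M$.

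The strategy is to color the $A$-side first and extend. For any tuple $(f_u)_{u \in A}$ of proper $L_u$-colorings and each $v \in B$, set $L'_v(m) := L_v(m) \setminus \{f_u(m) : u \in A\}$; then $|L'_v(m)| \geq k - 1$. The key lemma, which uses strong $k$-chromatic-choosability together with $k \geq a+1$, states that $V_v$ admits a proper $L'_v$-coloring unless $L'_v$ is a constant $(k-1)$-assignment. The ``only if'' direction follows from $\chi(M) = k$; for the converse, apply strong chromatic-choosability directly when every $|L'_v(m)| = k-1$, and otherwise pick a non-constant $(k-1)$-sub-assignment (possible when some $|L'_v(m_0)| \geq k$ since $L'_v(m_0)$ then has at least two distinct $(k-1)$-subsets) and apply strong chromatic-choosability. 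Call $(f_u)_u$ \emph{bad for $v$} when $L'_v$ is a constant $(k-1)$-assignment; equivalently, there is a $(k-1)$-set $C_v$ with $\{f_u(m) : u \in A\} = L_v(m) \setminus C_v$ (an $a$-set) at every $m \in V(M)$.

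Let $\mathcal F$ be the set of all tuples of proper $L_u$-colorings; then $|\mathcal F| = \prod_{u \in A} P(V_u, L_u) \geq (P_\ell(M, k+a-1))^a$. I aim to produce a tuple that is bad for no $v \in B$ via a dichotomy. If some tuple $(f_u)_u \in \mathcal F$ satisfies $f_u(m_0) = f_{u'}(m_0)$ for distinct $u, u' \in A$ and some $m_0 \in V(M)$, then $|\{f_u(m_0) : u\}| < a$ forces $|L'_v(m_0)| \geq k$ for every $v$, so this tuple is good. Otherwise no tuple has a repeated value, a rigidity that (using $k$-choosability of $M$ to extend any single pre-colored vertex, which is valid because for $a \geq 2$ the residual list sizes $k + a - 2 \geq k$ suffice) forces the lists $L_u(m)$ to be pairwise disjoint at every $m$. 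Under this disjointness I expect to show $|\{\text{bad tuples for } v\}| \leq 1$ for each $v \in B$: disjointness forces $f_u(m)$ to be the unique element of $L_u(m) \cap (L_v(m) \setminus C_v)$, and cross-$m$ consistency of $C_v$, together with the requirement $C_v \supseteq L_v(m) \setminus \bigsqcup_{u \in A} L_u(m)$ for every $m$, pins down $C_v$. Summing over $v$ yields at most $b < (P_\ell(M, k+a-1))^a \leq |\mathcal F|$ bad tuples in total, so a good tuple exists.

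The main obstacle I anticipate is the uniqueness-of-$C_v$ argument in the disjoint case, which requires carefully verifying that the local (per-$m$) constraints, together with the global size constraint $|C_v| = k-1$, admit at most one solution. A secondary delicate point in the dichotomy is making precise the ``pre-coloring extension'' step that converts the existence of a shared color $c \in L_u(m_0) \cap L_{u'}(m_0)$ into an actual tuple in $\mathcal F$ with $f_u(m_0) = f_{u'}(m_0) = c$, which is where the constraint $a \geq 2$ (and hence residual list sizes $\geq k$) enters.
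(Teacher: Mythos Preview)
Your overall architecture matches the paper's exactly: upper bound from Theorem~\ref{thm: generalupper}; for the lower bound, a dichotomy on whether the $A$-side lists share a color at some vertex (if so, build a good tuple directly; if not, the lists are pairwise disjoint and you count bad tuples per $v\in B$). The paper packages these as Lemma~\ref{lem: disjointlists} and Lemma~\ref{lem: badscc}, then finishes with Lemma~\ref{lem: badcolor} just as you do.

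There is, however, a genuine gap in where you locate the hypothesis $k\ge a+1$. Your ``key lemma'' (that $V_v$ is $L'_v$-colorable unless $L'_v$ is a constant $(k-1)$-assignment) is simply Proposition~\ref{pro: obvious}(ii) and needs only strong chromatic-choosability, not $k\ge a+1$. The place where $k\ge a+1$ is actually required is the uniqueness step---and your sketch there (``$C_v\supseteq L_v(m)\setminus\bigsqcup_u L_u(m)$ pins down $C_v$'') does not use it and does not work. Those containment and size constraints on $C_v$ do \emph{not} in general determine $C_v$ uniquely: the paper's Lemma~\ref{lem: scclower} and the explicit $K_3\square K_{3,1}$ example following it show that when $k<a+1$ there can be up to $2^{k-1}$ distinct bad tuples (hence several valid $C_v$'s), all satisfying your listed constraints. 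So your proposed mechanism for uniqueness is the wrong one.

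The paper's mechanism is different and is where $k\ge a+1$ enters. Assume two distinct bad tuples $c,c'$ exist with residual constant sets $A,B\subseteq L_v$ of size $k-1$. Pick $m$ and $j$ with $c_j(m)\ne c'_j(m)$ and set $b:=c'_j(m)$. Disjointness of the $L_{u_j}(m)$'s forces $b\in A$, and then the equation $B=(A\cup\{c_j(m'):j\})\setminus\{c'_j(m'):j\}$ at every $m'$ forces $b\in\{c'_j(m'):j\}$ for every vertex $m'$. Letting $C_j=\{m':c'_j(m')=b\}$ gives a partition of $V(M)$ into $a$ independent sets, hence $\chi(M)\le a$, contradicting $k\ge a+1$. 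You should replace your $C_v$-pinning sketch with this argument (or an equivalent one that visibly converts two bad tuples into an $a$-coloring of $M$).

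Two minor notes: your pre-coloring extension in the ``shared color'' branch is fine but the appeal to $a\ge 2$ is unnecessary---Proposition~\ref{pro: stronglistcolor} gives the extension directly for any $a\ge 1$; and your ``key lemma'' does not need the sub-assignment maneuver, since Proposition~\ref{pro: obvious}(ii) already covers lists of size $\ge k-1$ that are not a constant $(k-1)$-assignment.
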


So, we have that the bound in Theorem~\ref{thm: generalupper} is tight when our graph is strongly chromatic-choosable and its chromatic number is sufficiently large.  There are infinite families of such graphs constructed in~\cite{KM18}. Theorem~\ref{thm: sccexact} and the fact that $P(K_n,k)=P_{\ell}(K_n,k)$ and $P(C_n,k) = P_{\ell}(C_n,k)$ whenever $n,k \in \N$ imply the following.

\begin{cor} \label{cor: sccexact}
The following statements hold.
\\
(i)  For any $l \in \N$, $f_2(C_{2l+1})= (P_\ell(C_{2l+1},4))^2 = (3^{2l+1}-3)^2 = 9(9^l-1)^2$.
\\
(ii) For $n \in \N$ satisfying $n \geq a+1$, $f_a(K_n)= (P_\ell(K_n,n+a-1))^a =   \left( \frac{(n+a-1)!}{(a-1)!} \right)^a$.
\end{cor}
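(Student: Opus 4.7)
The plan is to apply Theorem \ref{thm: sccexact} to both families and then substitute the closed-form expressions for the chromatic polynomial, which coincides with the list color function for cycles and complete graphs (as recalled in the excerpt). The discussion preceding Theorem \ref{thm: star} notes that odd cycles and complete graphs are strongly chromatic-choosable, so the structural hypothesis of Theorem \ref{thm: sccexact} is immediate, and only the numerical condition $\chi(M) \geq a+1$ needs to be verified in each case.

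For part (i), I would note that $\chi(C_{2l+1}) = \chi_\ell(C_{2l+1}) = 3$, and with $a = 2$ the hypothesis $\chi(M) \geq a+1$ reads $3 \geq 3$, which holds. Theorem \ref{thm: sccexact} then gives $f_2(C_{2l+1}) = (P_\ell(C_{2l+1}, 4))^2$. Substituting into $P_\ell(C_n, k) = P(C_n, k) = (k-1)^n + (-1)^n(k-1)$ with $n = 2l+1$ and $k = 4$ produces $3^{2l+1} - 3$. Factoring this as $3(9^l - 1)$ and squaring yields $9(9^l - 1)^2$, as claimed.

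For part (ii), one has $\chi(K_n) = \chi_\ell(K_n) = n$, and the assumed inequality $n \geq a+1$ is exactly the hypothesis of Theorem \ref{thm: sccexact}, so $f_a(K_n) = (P_\ell(K_n, n+a-1))^a$. Using $P_\ell(K_n, k) = P(K_n, k) = \prod_{i=0}^{n-1}(k-i)$ with $k = n+a-1$, the product telescopes to $(n+a-1)(n+a-2)\cdots a = (n+a-1)!/(a-1)!$, and raising to the $a$th power completes the computation. There is no real obstacle here: all the content lies in Theorem \ref{thm: sccexact}, and what remains is routine substitution together with the already-cited identities $P_\ell = P$ for cycles and complete graphs.
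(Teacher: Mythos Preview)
Your proposal is correct and follows exactly the approach the paper takes: apply Theorem~\ref{thm: sccexact} (after checking its hypotheses) and then substitute the known identities $P_\ell = P$ for odd cycles and complete graphs together with the closed-form chromatic polynomials. The paper's own proof is a one-line remark to this effect, so your write-up is simply a more detailed version of the same argument.
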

Notice that Corollary~\ref{cor: sccexact}~(ii) shows that the bound in Theorem~\ref{thm: generalupper} is tight for any $a \in \N$.

We do not know of any strongly chromatic-choosable graph $M$ for which $f_a(M) < (P_\ell(M, \chi_\ell(M) + a - 1))^a.$  This leads us to the following question.

\begin{ques} \label{ques: scc}
Does there exist a strongly chromatic-choosable graph $M$ such that $f_a(M) < (P_\ell(M, \chi_\ell(M) + a - 1))^a$?
\end{ques}

Another interesting question involves only complete graphs.

\begin{ques} \label{ques: complete}
Is it the case that $f_a(K_n) =   \left( \frac{(n+a-1)!}{(a-1)!} \right)^a$ for each $n, a \in \N$?
\end{ques}

Since $f_a(K_1)=a^a$, the answer to Question~\ref{ques: complete} is yes when $n=1$.  We have a rather tedious argument, which for the sake of brevity will not be presented in this paper, that shows $f_2(K_2)=36$.  One could ask questions analogous to Question~\ref{ques: complete} for any family of strongly chromatic-choosable graphs.  We end Section~\ref{scc} by proving a general lower bound on $f_a$ for strongly chromatic-choosable graphs.

\begin{thm} \label{thm: scclower}
Suppose $M$ is a strong $k$-chromatic-choosable graph.  Then,
$$ \frac{(P_\ell(M,k+a-1))^a}{2^{k-1}} \leq f_a(M).$$
\end{thm}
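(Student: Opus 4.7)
The plan is to prove the contrapositive: assuming $b < N^a/2^{k-1}$ where $N := P_\ell(M, k+a-1)$, I want to show that $\chi_\ell(M \square K_{a,b}) \leq k+a-1$, i.e., every $(k+a-1)$-list-assignment $L$ for $G := M \square K_{a,b}$ admits a proper $L$-coloring. With $A, B$ the partite sets of $K_{a,b}$, I would use a two-step extension: first choose proper colorings of the copies $\{M_u : u \in A\}$ of $M$, then extend to $\{M_v : v \in B\}$. For each $u \in A$, the restriction $L|_{V_u}$ is a $(k+a-1)$-list-assignment of $M_u \cong M$, so the set $\mathcal{F}_u$ of proper $L|_{V_u}$-colorings of $M_u$ satisfies $|\mathcal{F}_u| \geq N$, whence $\mathcal{F} := \prod_{u \in A} \mathcal{F}_u$ has $|\mathcal{F}| \geq N^a$. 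For $f = (f_u) \in \mathcal{F}$ and $v \in B$, define the residual list assignment on $M_v$ by $L_v^f(w) := L((w,v)) \setminus \{f_u(w) : u \in A\}$; every list has size at least $k-1$, and I call $f$ \emph{bad for} $v$ when $M$ is not $L_v^f$-colorable.

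By the strong $k$-chromatic-choosability of $M$, I would show that $f$ is bad for $v$ precisely when $L_v^f$ is a constant $(k-1)$-assignment: that is, when there is a $(k-1)$-set $S_v^f$ with $L_v^f(w) = S_v^f$ for every $w \in V(M)$. Indeed, if some $|L_v^f(w)| \geq k$, or if the $(k-1)$-sized lists of $L_v^f$ are not all equal, then one can shrink $L_v^f$ to a non-constant $(k-1)$-sublist-assignment $L_v''$, and strong $k$-chromatic-choosability provides a proper $L_v''$-coloring of $M$, hence a proper $L_v^f$-coloring. Consequently, $f$ being bad for $v$ is the rigid condition $\{f_u(w) : u \in A\} = L((w,v)) \setminus S_v^f$ for every $w \in V(M)$, which in particular forces the $a$ values $\{f_u(w) : u \in A\}$ to be pairwise distinct and to form a prescribed $a$-subset of $L((w,v))$ at every vertex of $M$.

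The main obstacle is establishing the counting bound closing the argument. For each $v \in B$, I would fix a candidate $(k-1)$-set $S \subseteq \bigcap_{w \in V(M)} L((w,v))$ and count the tuples $(f_u) \in \mathcal{F}$ satisfying $\{f_u(w) : u \in A\} = L((w,v)) \setminus S$ for all $w$; summing the resulting counts over the valid $S$ gives the number of tuples bad for $v$. Exploiting both the rigidity above and the requirement that each $f_u$ be a proper $L|_{V_u}$-coloring of $M$ is the heart of the matter, and is expected to yield a per-$v$ contribution that, once multiplied by $b < N^a/2^{k-1}$ and summed over $v \in B$, is strictly less than $|\mathcal{F}|$; the factor $2^{k-1}$ in the denominator should arise from the combinatorics of choosing the $(k-1)$-subsets $S$ of the common colors of the lists on $M_v$. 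A union bound then produces some $f \in \mathcal{F}$ good for every $v \in B$, and extending $f$ by a proper $L_v^f$-coloring of each $M_v$ (which exists by the characterization of bad tuples above) yields the desired proper $L$-coloring of $G$.
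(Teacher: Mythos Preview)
Your overall framework---count the colorings of $\bigcup_{u \in A} M_u$, bound the number bad for each $v\in B$ by $2^{k-1}$, and finish by a union bound---is exactly the paper's strategy. Your characterization of ``bad for $v$'' as forcing a constant $(k-1)$-residual is also correct. The gap is in the step you flag as ``the main obstacle'': the per-$v$ bound of $2^{k-1}$ bad tuples simply does not hold without an additional reduction you have omitted.

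Concretely, take $M=K_2$ (so $k=2$), $a=2$, $b=1$, and give every vertex of $K_2\square K_{2,1}$ the list $\{1,2,3\}$. For each of the three choices of the constant residual $S\in\{\{1\},\{2\},\{3\}\}$ there are exactly two proper colorings of $M_{u_1}\cup M_{u_2}$ producing that $S$, for a total of $6>2^{k-1}=2$ bad tuples. Your plan of summing over candidate $(k-1)$-sets $S$ therefore cannot yield $2^{k-1}$ in general, and the heuristic that ``$2^{k-1}$ should arise from choosing $S$'' is not correct as stated. What the paper does is first dispose of any $(k+a-1)$-assignment in which, for some $i$ and some $x\neq y$, the lists $L(v_i,u_x)$ and $L(v_i,u_y)$ intersect: in that case one can color $(v_i,u_x)$ and $(v_i,u_y)$ with a common color, guaranteeing the residual list at $(v_i,w_t)$ has size $\geq k$ for every $t$, so no $w_t$ is bad and one colors directly. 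Only after this reduction to pairwise-disjoint lists does the paper count: fixing a single vertex $v_1$, the sets $X_j:=L(v_1,w_1)\cap L(v_1,u_j)$ are pairwise disjoint inside $L(v_1,w_1)$, so $\sum_j|X_j|\le k+a-1$; the map $f\mapsto(f(v_1,u_j))_{j}$ is shown to be an injection from bad tuples into $\prod_j X_j$, and the product of $a$ nonnegative integers summing to at most $k+a-1$ (with $k\leq a$) is maximized at $2^{k-1}$. Your proposal is missing both the disjoint-lists reduction and this product bound; once you add the reduction, your ``one bad tuple per valid $S$'' idea becomes correct, and the number of valid $S$ can then be bounded by $\prod_j|X_j|\le 2^{k-1}$ via the same arithmetic.
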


Considering Theorem~\ref{thm: sccexact}, Theorem~\ref{thm: scclower} gives us something new when $\chi(M) < a+1$.

\section{General Upper Bound} \label{general}

In this section we will prove Theorem~\ref{thm: generalupper}.  Before we prove this theorem, we introduce some notation and terminology that will be used for the remainder of this paper.  Whenever we have a graph of the form $H = G \square K_{a,b}$ with $a,b \in \N$, we will assume that the vertex set of the first factor is $\{v_1, v_2, \ldots, v_n \}$.  We also assume that the partite sets of the copy of $K_{a,b}$ used to form $H$ are $\{u_1, u_2, \ldots, u_a \}$ and $\{w_1, w_2, \ldots, w_b \}$.  If $L$ is a list assignment for $H = G \square K_{a,b}$ and $f$ is a proper $L$-coloring of $H[ \bigcup_{j=1}^a V_{u_j} ]$, then we say $f$ is a \emph{ bad coloring for the copy of $G$ corresponding to $w_l$ } if there is no proper $L'$-coloring for $H[V_{w_l}]$ where $L'$ is the list assignment for $H[V_{w_l}]$ given by $L'(v_i,w_l) = L(v_i,w_l) - \{f(v_i, u_j) : j \in [a] \}$ for each $i \in [n]$ \footnote{We use $L(v_i,w_l)$ rather than the technically correct $L((v_i,w_l))$.}. We now present a straightforward lemma related to this notion of bad coloring.

\begin{lem} \label{lem: badcolor}
Suppose $H = G \square K_{a,b}$ with $a,b \in \N$ and $L$ is a list assignment for $H$.  Suppose $\mathcal{C}$ is the set of all proper $L$-colorings of $H[ \bigcup_{j=1}^a V_{u_j} ]$.  For each $f \in \mathcal{C}$ there exists an $l \in [b]$ such that $f$ is a bad coloring for the copy of $G$ corresponding to $w_l$ if and only if there is no proper $L$-coloring of $H$.
\end{lem}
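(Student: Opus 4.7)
The plan is to prove the biconditional directly by observing the structural fact that governs extensions: in $H = G \square K_{a,b}$, the only edges between the ``$A$-side'' $\bigcup_{j=1}^a V_{u_j}$ and the ``$B$-side'' $\bigcup_{l=1}^b V_{w_l}$ are the ``horizontal'' edges of the form $(v_i,u_j)(v_i,w_l)$, since $u_j w_l \in E(K_{a,b})$ while no two $u_j$'s or two $w_l$'s are adjacent. In particular, for distinct $l, l' \in [b]$ the copies $H[V_{w_l}]$ and $H[V_{w_{l'}}]$ share no edges, so extending a coloring to the $B$-side can be done independently copy by copy.

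For the forward direction, I would argue by contrapositive: assume $H$ has a proper $L$-coloring $g$, and set $f = g\restriction_{\bigcup_j V_{u_j}}$, which is plainly an element of $\mathcal{C}$. I will show $f$ is not bad for any $w_l$. Fix $l \in [b]$ and consider $g \restriction_{V_{w_l}}$. By the structural observation, properness of $g$ forces $g(v_i,w_l) \neq g(v_i,u_j) = f(v_i,u_j)$ for every $i \in [n]$ and $j \in [a]$. Hence $g(v_i,w_l) \in L(v_i,w_l) \setminus \{f(v_i,u_j): j \in [a]\} = L'(v_i,w_l)$, so $g \restriction_{V_{w_l}}$ is a proper $L'$-coloring of $H[V_{w_l}]$, witnessing that $f$ is not bad for $w_l$.

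For the backward direction, again by contrapositive, suppose there exists $f \in \mathcal{C}$ that is not bad for any $w_l$. For each $l \in [b]$, choose a proper $L'_l$-coloring $g_l$ of $H[V_{w_l}]$, where $L'_l(v_i,w_l) = L(v_i,w_l) \setminus \{f(v_i,u_j) : j \in [a]\}$. Define $g$ on $V(H)$ by letting $g$ agree with $f$ on the $A$-side and with $g_l$ on each $V_{w_l}$. Since $L'_l(v_i,w_l) \subseteq L(v_i,w_l)$, $g$ is an $L$-coloring. Properness is checked edge-by-edge: edges inside $H[\bigcup_j V_{u_j}]$ are handled by $f \in \mathcal{C}$; edges inside $H[V_{w_l}]$ by $g_l$; and the only remaining edges are the horizontal ones $(v_i,u_j)(v_i,w_l)$, for which $g(v_i,w_l) = g_l(v_i,w_l) \in L'_l(v_i,w_l)$ excludes $f(v_i,u_j) = g(v_i,u_j)$ by construction. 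Thus $g$ is a proper $L$-coloring of $H$.

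There is no real obstacle here; the only thing to get right is the edge-set bookkeeping for the Cartesian product, which is exactly what the ``horizontal edges only'' observation packages. I would state that observation as a brief preliminary sentence and then write each direction in essentially the form above.
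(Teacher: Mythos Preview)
Your proof is correct and follows essentially the same approach as the paper: both directions are handled via contrapositive/contradiction by restricting a putative proper $L$-coloring of $H$ to the $A$-side to obtain an $f\in\mathcal{C}$ that is not bad anywhere, and conversely extending a not-bad $f$ copy-by-copy to the $B$-side. The only difference is that you make the edge-set bookkeeping for the Cartesian product (the ``horizontal edges only'' observation) explicit, whereas the paper leaves it implicit.
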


\begin{proof}
We prove the only if direction first.  Suppose for the sake of contradiction that $c$ is a proper $L$-coloring of $H$.  Let $c'$ be the proper $L$-coloring of $H[ \bigcup_{j=1}^a V_{u_j} ]$ obtained by restricting the domain of $c$ to $\bigcup_{j=1}^a V_{u_j}$ (clearly $c' \in \mathcal{C}$).  We know that there is a $d \in [b]$ such that $c'$ is a bad coloring for the copy of $G$ corresponding to $w_d$.  Let $L'(v_i,w_d) = L(v_i,w_d) - \{c'(v_i, u_j) : j \in [a] \}$ for each $i \in [n]$.  Restricting the domain of $c$ to $V_{w_d}$ yields a proper $L'$-coloring of the copy of $G$ corresponding to $w_d$ which is a contradiction.

We now prove the contrapositive of the converse.  Suppose there is a $g \in \mathcal{C}$ such that for each $t \in [b]$, $g$ is not a bad coloring for the copy of $G$ corresponding to $w_t$.  Let $L'(v_i,w_t) = L(v_i,w_t) - \{c'(v_i, u_j) : j \in [a] \}$ for each $i \in [n]$ and $t \in [b]$.  For each $t \in [b]$, since $g$ is not a bad coloring for the copy of $G$ corresponding to $w_t$, there is a proper $L'$-coloring of $H[V_{w_t}]$.  Coloring each copy of $G$ corresponding to a vertex in $\{w_1, w_2, \ldots, w_b \}$ according to a proper $L'$-coloring extends $g$ to a proper $L$-coloring of $H$.
\end{proof}

We are now ready to prove Theorem~\ref{thm: generalupper}

\begin{proof}
Suppose $H= G \square K_{a,b}$ and $\chi_\ell(G)=k$.  Let $t = P_\ell(G,k+a-1)$.  In order to prove the desired, we must show that $\chi_\ell(H) = k+a$ when $b=t^a$.  We already know $\chi_\ell(H) \leq k + a$ (by Theorem~\ref{thm: Borow1}).  So, we suppose that $b = t^a$, and we will construct a $(k+a-1)$-assignment, $L$, for $H$ such that there is no proper $L$-coloring of $H$.

Let $G_i$ be the copy of $G$ corresponding to $u_i$.  We inductively assign lists of size $(k+a-1)$ to the vertices in $\bigcup_{i=1}^a V_{u_i}$ as follows.  We begin by assigning lists, $L(v)$, to each $v \in V(G_1)$ such that there are exactly $t$ distinct proper $L$-colorings of $G_1$.  Then, for each $1 < i \leq a$, we assign lists, $L(v)$, to each $v \in V(G_i)$ such that there are exactly $t$ distinct proper $L$-colorings of $G_i$ and
$$ \left(\bigcup_{v \in V(G_i)} L(v) \right) \bigcap \left( \bigcup_{j=1}^{i-1} \bigcup_{v \in V(G_j)} L(v) \right ) = \emptyset$$
(this can be done by taking the lists for $G_1$, thinking of the colors as natural numbers, and adding a sufficiently large natural number to each color in each list).  Now, for $i \in [a]$, we let $c_{i,1}, c_{i,2}, \ldots, c_{i,t}$ denote the $t$ distinct proper $L$-colorings of $G_i$.  We note that there are exactly $t^a$ proper $L$-colorings of $H[\bigcup_{i=1}^a V_{u_i}]$ (since for each $i \in [a]$ we have $t$ choices in how we color $G_i$).  Suppose we index the $t^a$ proper $L$-colorings of $H[\bigcup_{i=1}^a V_{u_i}]$ as: $c^{(1)}, c^{(2)}, \ldots, c^{(t^a)}$.

Now, suppose that $L'$ is a $(k-1)$-assignment for $G$ such that there is no proper $L'$-coloring of $G$ and
$$\left(\bigcup_{v \in V(G)} L'(v) \right) \bigcap \left( \bigcup_{j=1}^{a} \bigcup_{v \in V(G_j)} L(v) \right ) = \emptyset.$$
Let $G'_d$ be the copy of $G$ corresponding to $w_d$.  For each $d \in [t^a]$ we assign a list, $L(v)$, of size $(k+a-1)$ to each $v \in V(G'_d)$ as follows.  Suppose that the coloring $c^{(d)}$ is formed via the colorings: $c_{1,b_1}, c_{2,b_2}, \ldots, c_{a,b_a}$ (note that $b_j$ is between 1 and $t$ for each $j \in [a]$).  By construction, we know that $|\{c_{j,b_j}(v_i,u_j) : j \in [a] \}|=a$ for each $i \in [n]$.  So, for each $(v_i,w_d) \in V(G'_d)$, we let
$$L(v_i,w_d) = L'(v_i) \bigcup \{c_{j,b_j}(v_i,u_j) : j \in [a] \}.$$
This completes the construction of our $(k+a-1)$-assignment for $H$.

Finally, notice that by construction $c^{(d)}$ is a bad coloring for $G'_d$ for each $d \in [t^a]$.  Lemma~\ref{lem: badcolor} implies that there is no proper $L$-coloring of $H$.
\end{proof}

It is fairly easy to modify the idea of the proof of Theorem~\ref{thm: generalupper} in order to obtain a proof of Theorem~\ref{thm: mostgeneral}.  We would simply obtain graph $H'$ from $H$ by deleting edges in $H$ until all vertices in $B$ have degree $\delta$.  Then, we could use the same construction idea to obtain a $(\chi_\ell(G) + \delta - 1)$-assignment for $G \square H'$, $L$, such that there is no proper $L$-coloring of $G \square H'$.  This would then imply $\chi_\ell(G) + \delta - 1 < \chi_\ell(G \square H') \leq \chi_\ell (G \square H).$

\section{Computing $f_a$ for Strongly Chromatic-Choosable Graphs} \label{scc}

In this section we will prove Theorems~\ref{thm: sccexact} and~\ref{thm: scclower}.  Suppose that $M$ is a strong $k$-chromatic-choosable graph.  Recall that this means $\chi(M) = k$ and every $(k-1)$-assignment, $L$, for which $M$ is not $L$-colorable is constant.  In this section our focus is studying $f_a(M)$.  By Theorem~\ref{thm: star} we know that $f_1(M) = P_\ell(M,k)$.  So, we assume $a \geq 2$ throughout this section.  Also, since the strong 1-chromatic-choosable graphs are edgeless graphs, we only consider strongly chromatic-choosable graphs with chromatic number at least 2 throughout this section (since we know $f_a(K_1)=a^a$ for each $a \in \N$).

There are several properties of strongly chromatic-choosable graphs that follow immediately from the definition.  We now mention some of these results (all proofs of these results can be found in~\cite{KM18}).

\begin{pro} [\cite{KM18}] \label{pro: obvious} Suppose $M$ is a strong $k$-chromatic-choosable graph.  Then,
\\
(i)  $\chi_{\ell}(M)=k$ (i.e. $M$ is chromatic-choosable); \\
(ii)  If $L$ is a list assignment for $M$ with $|L(v)| \geq k-1$ for each $v \in V(M)$ and $L$ is not a constant $(k-1)$-assignment, then there exists a proper $L$-coloring of $M$; \\
(iii)  $M \vee K_p$ is strong $(k+p)$-chromatic-choosable for any $p \in \N$; \\
(iv)  For any $v \in V(M)$, $\chi(M-\{v\}) \leq \chi_{\ell}(M - \{v\})<k$; \\
(v)  $k=2$ if and only if $M$ is a $K_2$; \\
(vi)  $k=3$ if and only if $M$ is an odd cycle.
\end{pro}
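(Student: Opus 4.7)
The proposition bundles six structural consequences of the defining property of strong $k$-chromatic-choosability, which in its contrapositive form says that every non-constant $(k-1)$-assignment on $M$ admits a proper $L$-coloring. My plan is to exploit this reformulation in two complementary ways: lift the coloring criterion from $(k-1)$-assignments to assignments with larger lists via an elementary subset argument, and construct bad non-constant $(k-1)$-assignments to obtain contradictions when $M$ is modified in the ways described.

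For (i) and (ii), the key lemma is the following: if $L$ is any list assignment on $M$ with $|L(v)| \geq k-1$ for every $v$ and every $(k-1)$-subassignment $L' \leq L$ (meaning $L'(v) \subseteq L(v)$ and $|L'(v)|=k-1$) is constant, then $L$ itself is a constant assignment with all lists equal to some fixed set $T$. The argument is purely combinatorial: for any vertices $u,v$ and any $(k-1)$-subset $T \subseteq L(u)$, picking $L'(u)=T$ and choosing $L'(w)$ arbitrarily inside $L(w)$ for $w \neq u$ produces a valid subassignment, which must be constant, forcing $L'(v) = T$ and hence $T \subseteq L(v)$; taking the union over all such $T$ forces $L(u) \subseteq L(v)$, and symmetry gives equality. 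With this lemma, (i) and (ii) follow at once: either some $(k-1)$-subassignment $L' \leq L$ is non-constant, in which case strong choosability supplies an $L'$-coloring that is also an $L$-coloring, or $L$ is a constant assignment with $|L(v)| \geq k$, in which case $L$ is colorable because $\chi(M)=k$. In (ii) the hypothesis that $L$ is not a constant $(k-1)$-assignment rules out the degenerate case where all lists coincide on a fixed set of size $k-1$.

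For (iii) I start with any uncolorable $(k+p-1)$-assignment $L$ for $M \vee K_p$, color the clique $K_p$ greedily using $p$ distinct colors $c_1, \ldots, c_p$ from its lists (possible since each list has size $\geq p$), and apply (ii) to the residual $(k-1)$-or-larger lists on $M$: either $M$ is colorable from the residual, giving an $L$-coloring of the whole graph and a contradiction, or the residual is a constant $(k-1)$-assignment, which forces every chosen clique color to lie in every $L(v)$ for $v \in V(M)$ and all such $L(v)$'s to agree. Varying a single clique color $c_i$ over all of $L(x_i)$ while re-extending the others greedily then yields $L(x_i) \subseteq L(v)$, which together with equality of list sizes shows $L$ is constant on all of $M \vee K_p$. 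For (iv) I assume $\chi_\ell(M - v) \geq k$, take a $(k-1)$-assignment $L'$ on $M - v$ with no proper $L'$-coloring, and extend to a $(k-1)$-assignment $L$ on $M$ by choosing $L(v)$ so that $L$ is non-constant (always possible: if $L'$ is itself non-constant any extension works, otherwise take $L(v)$ to be any $(k-1)$-set different from the common value of $L'$); the extension $L$ remains uncolorable, contradicting strong $k$-chromatic-choosability.

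For (v), a $1$-assignment is just a pre-coloring $f$, and strong $2$-chromatic-choosability translates to the statement that every non-constant pre-coloring of $M$ is proper. Testing this with the pre-coloring that assigns color $1$ to a chosen pair $\{u,v\}$ and color $2$ elsewhere forces $uv \notin E(M)$ whenever $M$ has at least three vertices, so such an $M$ would be edgeless and contradict $\chi(M)=2$; hence $|V(M)|=2$ and $M=K_2$, and the converse is immediate. For (vi), (iv) implies every $M - v$ is $2$-choosable; combining the Erd\H{o}s--Rubin--Taylor characterization (the core of a $2$-choosable graph is $K_1$, an even cycle, or $\theta_{2,2,2m}$) with $\chi(M)=3$ (so $M$ contains an odd cycle) and connectedness, a case analysis on where an odd cycle can sit inside $M$ forces every vertex of $M$ to lie on every odd cycle and rules out chords, leaving $M$ as a single odd cycle; the converse follows by a direct argument walking non-constant $2$-lists around $C_{2n+1}$. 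The main obstacle I expect is the case analysis in (vi), where the classification of $2$-choosable graphs must be combined carefully with the structural constraint $\chi(M)=3$; part (iii) is next-most delicate, because one must verify that the greedy clique coloring can be varied freely enough to conclude that every list of $L$ coincides with every other.
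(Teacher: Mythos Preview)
The paper does not actually prove Proposition~\ref{pro: obvious}; it merely imports the statement from~\cite{KM18} with the remark that all proofs can be found there. So there is no in-paper argument to compare against, and your proposal must be judged on its own merits.

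Your arguments for (i)--(v) are correct and cleanly organised. The subassignment lemma you isolate for (i)--(ii) is exactly the right tool, and your handling of (iii) is sound: once you observe that for \emph{every} greedy colouring of the clique the residual lists on $M$ must form a constant $(k-1)$-assignment, varying one clique colour $c_i$ over $L(x_i)$ (re-extending the remaining $p-1$ greedily, which works because $|L(x_j)|=k+p-1\geq p$) forces $L(x_i)\subseteq L(v)$ and hence equality by counting. Part (iv) is fine once you note $|V(M)|\geq 2$ when $k\geq 2$, so a non-constant extension exists. Part (v) is correct as written; the key point is that a non-constant $1$-assignment is \emph{forced} to be a proper colouring, so the two-valued function you build must be proper, whence $uv\notin E(M)$.

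For (vi) your plan works, but it is heavier than necessary. You invoke the Erd\H{o}s--Rubin--Taylor classification of $2$-choosable graphs, yet all you need from (iv) is that each $M-v$ is \emph{bipartite} (since $\chi(M-v)\leq\chi_\ell(M-v)<3$). From this alone: every vertex of $M$ lies on every odd cycle, so any odd cycle is Hamiltonian; a chord of such a cycle would create a strictly shorter odd cycle, which cannot be Hamiltonian, a contradiction. Connectedness need not be assumed separately: if $M$ had a second component, deleting a vertex from it would leave the odd cycle intact, contradicting bipartiteness of $M-v$. This replaces your ``case analysis on where an odd cycle can sit'' with a two-line argument and avoids the $\theta$-graph classification entirely. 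The converse direction (odd cycles are strong $3$-chromatic-choosable) is exactly the standard greedy walk you describe.
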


\begin{pro} [\cite{KM18}] \label{pro: stronglistcolor}
Suppose $M$ is a strong $k$-chromatic-choosable graph.  Suppose $L$ is an arbitrary $m$-assignment for $M$ with $m \geq k$.  Then, for any $v \in V(M)$ and any $\alpha \in L(v)$, there is a proper $L$-coloring, $c$, for $M$ such that $c(v) = \alpha$.  Consequently,
$$P_\ell(M,m) \geq m \max_{v \in V(M)} P_\ell(M-\{v\}, m-1) \geq m.$$
\end{pro}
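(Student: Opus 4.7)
The plan is to reduce the question of finding a proper $L$-coloring assigning $\alpha$ to $v$ to a question of list coloring $M - v$, where Proposition~\ref{pro: obvious}(iv) provides exactly what is needed.

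First I would fix $v \in V(M)$ and $\alpha \in L(v)$, and define a list assignment $L_\alpha$ on $V(M) \setminus \{v\}$ by $L_\alpha(w) = L(w) \setminus \{\alpha\}$ if $w$ is adjacent to $v$, and $L_\alpha(w) = L(w)$ otherwise. Since $|L(w)| = m \geq k$, we have $|L_\alpha(w)| \geq m - 1 \geq k - 1$ at every $w \in V(M - v)$. By Proposition~\ref{pro: obvious}(iv), $\chi_\ell(M - v) < k$, i.e.\ $M - v$ is $(k-1)$-choosable, so restricting each $L_\alpha(w)$ to any $(k-1)$-subset yields a $(k-1)$-assignment which admits a proper coloring. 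That coloring is automatically a proper $L_\alpha$-coloring of $M - v$, and extending it by $c(v) := \alpha$ produces a proper $L$-coloring of $M$ with $c(v) = \alpha$, since every neighbor of $v$ was colored from $L(w) \setminus \{\alpha\}$.

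For the counting inequality, I would partition the proper $L$-colorings of $M$ according to the color assigned to $v$: for each $\alpha \in L(v)$, the $L$-colorings with $c(v) = \alpha$ are exactly the proper $L_\alpha$-colorings of $M - v$, and these classes are pairwise disjoint. Taking any $(m-1)$-sub-assignment $L''$ of $L_\alpha$ (possible since $|L_\alpha(w)| \geq m - 1$), every proper $L''$-coloring is also a proper $L_\alpha$-coloring, so
$$P(M - v, L_\alpha) \;\geq\; P(M - v, L'') \;\geq\; P_\ell(M - v, m - 1).$$
Summing over the $m$ choices of $\alpha \in L(v)$ gives $P(M, L) \geq m \cdot P_\ell(M - v, m - 1)$; taking the minimum over $m$-assignments $L$ and then the maximum over $v$ yields the first inequality of the displayed chain. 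The final inequality $P_\ell(M - v, m - 1) \geq 1$ follows because $M - v$, being $(k-1)$-choosable, is also $(m-1)$-choosable (restrict any $(m-1)$-assignment to a $(k-1)$-sub-assignment).

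The whole argument is routine once Proposition~\ref{pro: obvious}(iv) is in hand; I do not anticipate any essential obstacle. The only subtle point to state carefully is the passage from $|L_\alpha(w)| \geq m-1$ to the bound $P_\ell(M-v, m-1)$ on colorings, which is handled cleanly by the sub-assignment step above.
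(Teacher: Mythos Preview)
Your argument is correct. Note, however, that the paper does not actually supply a proof of this proposition: it is quoted from~\cite{KM18} with the remark that all proofs can be found there. Your approach---deleting $v$, passing to the residual list assignment $L_\alpha$, and invoking Proposition~\ref{pro: obvious}(iv) to guarantee $(k-1)$-choosability of $M-\{v\}$---is exactly the natural one, and the counting inequality via partitioning by the color at $v$ and bounding each block below by $P_\ell(M-\{v\},m-1)$ through a sub-assignment is the standard maneuver. There is nothing to add or correct.
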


We now prove two lemmas that will lead to the proof of Theorem~\ref{thm: sccexact}.

\begin{lem} \label{lem: disjointlists}
Suppose $M$ is a strong $k$-chromatic-choosable graph and $H= M \square K_{a,b}$.  Suppose that $L$ is a $(k+a-1)$-assignment for $H$ such that there exist $l, x,$ and $y$ with $x \neq y$ and $L(v_l, u_x) \cap L(v_l, u_y) \neq \emptyset$.  Then, there is a proper $L$-coloring of $H$.
\end{lem}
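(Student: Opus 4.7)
The plan is to exploit the hypothesis that $\alpha \in L(v_l, u_x) \cap L(v_l, u_y)$ for some color $\alpha$ by \emph{forcing} the copies of $M$ corresponding to $u_x$ and $u_y$ to both use $\alpha$ at vertex $v_l$. This creates a ``collision'' so that when these colorings are used to restrict the lists on the copies corresponding to the $w$'s, only $a-1$ (rather than $a$) distinct colors will be deleted from the list at $(v_l, w_d)$, giving exactly the slack needed to invoke Proposition~\ref{pro: obvious}(ii).

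First, apply Proposition~\ref{pro: stronglistcolor} to the copy $H[V_{u_x}]$ of $M$ equipped with the $(k+a-1)$-list assignment inherited from $L$: since $k+a-1 \geq k$, this yields a proper $L$-coloring $c_x$ of $H[V_{u_x}]$ with $c_x(v_l, u_x) = \alpha$. The same proposition applied to $H[V_{u_y}]$ produces a proper $L$-coloring $c_y$ with $c_y(v_l, u_y) = \alpha$. For each remaining $j \in [a] \setminus \{x, y\}$, since $\chi_\ell(M) = k \leq k+a-1$, simply choose any proper $L$-coloring $c_j$ of $H[V_{u_j}]$. Because $u_1, \dots, u_a$ form an independent set of $K_{a,b}$, the copies $H[V_{u_j}]$ are pairwise non-adjacent in $H$, so the $c_j$ combine into a single proper $L$-coloring $f$ of $H[\bigcup_{j=1}^a V_{u_j}]$.

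Next, I would verify that $f$ is not a bad coloring for any copy $H[V_{w_d}]$, $d \in [b]$. Setting $L'(v_i, w_d) = L(v_i, w_d) - \{c_j(v_i, u_j) : j \in [a]\}$, the deletion at $(v_l, w_d)$ removes at most $a-1$ distinct colors (since $c_x(v_l, u_x) = c_y(v_l, u_y) = \alpha$), giving $|L'(v_l, w_d)| \geq k$; at each remaining vertex, $|L'(v_i, w_d)| \geq k-1$. Since $L'$ has a list of size exceeding $k-1$, it cannot be a constant $(k-1)$-assignment for the copy of $M$ at $w_d$, and so Proposition~\ref{pro: obvious}(ii) yields a proper $L'$-coloring of $H[V_{w_d}]$. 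As no copy is bad for $f$, Lemma~\ref{lem: badcolor} then extends $f$ to a proper $L$-coloring of $H$.

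There is no serious obstacle here; the argument is essentially a careful bookkeeping exercise. The only mildly delicate point is recognizing that the collision at $v_l$ is precisely what forces $|L'(v_l, w_d)| \geq k$ rather than merely $\geq k-1$, which is exactly the threshold at which Proposition~\ref{pro: obvious}(ii) becomes applicable regardless of what the remaining lists look like.
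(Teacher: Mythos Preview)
Your proof is correct and follows essentially the same approach as the paper: force the copies at $u_x$ and $u_y$ to use $\alpha$ at $v_l$ via Proposition~\ref{pro: stronglistcolor}, color the remaining $u_j$-copies arbitrarily, observe that the resulting residual lists at each $w_d$ have $|L'(v_l,w_d)|\geq k$ and $|L'(v_i,w_d)|\geq k-1$ elsewhere, and then invoke Proposition~\ref{pro: obvious}(ii) and Lemma~\ref{lem: badcolor}. The paper's argument is the same in every essential step.
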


\begin{proof}
Suppose that $\alpha \in L(v_l, u_x) \cap L(v_l, u_y)$.  We begin by finding a proper $L$-coloring for each of the copies of $M$ corresponding to $u_1, \ldots, u_a$.  When it comes to the copies of $M$ corresponding to $u_x$ and $u_y$, we ensure that our proper $L$-colorings for these copies color both $(v_l, u_x)$ and $(v_l, u_y)$ with $\alpha$.  We know this is possible by Proposition~\ref{pro: stronglistcolor}.  For the remaining copies of $M$, we simply take any proper $L$-coloring, and we know there must be at least one such coloring since $k+a-1 \geq k+1 > k$.  Let $c_j$ be the proper $L$-coloring that we found for the copy of $M$ corresponding to $u_j$ for each $j \in [a]$.

Now, for $i \in [n]$ and $t \in [b]$ (we are defining a list assignment for the yet to be colored vertices), we let $L'(v_i, w_t) = L(v_i, w_t) - \{c_j(v_i, u_j) : j \in [a] \}.$  It is easy to see that for each $i$ and $t$, $|L'(v_i,w_t)| \geq k+a-1-a = k-1$, and $|L'(v_l,w_t)| \geq k+a-1 - (a-1) = k$.  So, for any $t \in [b]$, we see that $L'$ restricted to the copy of $M$ corresponding to $w_t$ is not a constant $(k-1)$-assignment.  Proposition~\ref{pro: obvious} then implies that our proper $L$-coloring of $H[ \bigcup_{j=1}^a V_{u_j} ]$ is not bad for the copy of $M$ corresponding to $w_t$.  Lemma~\ref{lem: badcolor} then implies there is a proper $L$-coloring of $H$.
\end{proof}

\begin{lem} \label{lem: badscc}
Suppose $M$ is a strong $k$-chromatic-choosable graph and $H= M \square K_{a,1}$ with $k \geq a+1$.  Suppose that $L$ is a $(k+a-1)$-assignment for $H$ such that the lists \\ $L(v_i,u_1), L(v_i,u_2), \ldots, L(v_i,u_a)$ are pairwise disjoint for each $i \in [n]$.  Then, there is at most one proper $L$-coloring of $H[ \bigcup_{j=1}^a V_{u_j}]$ that is bad for the copy of $M$ corresponding to $w_1$.
\end{lem}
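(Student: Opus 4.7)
My plan is to suppose two bad colorings $f$ and $f'$ of $H[\bigcup_{j=1}^a V_{u_j}]$ exist and deduce $f=f'$. For any bad coloring $f$, write $T_i = \{f(v_i, u_j) : j \in [a]\}$; pairwise disjointness of $L(v_i, u_1), \ldots, L(v_i, u_a)$ forces $|T_i|=a$, so the reduced list $L(v_i, w_1) \setminus T_i$ has size at least $k-1$. Proposition~\ref{pro: obvious}(ii) then says this reduced assignment must be a constant $(k-1)$-assignment on the copy of $M$ at $w_1$ — otherwise that copy would be properly list-colorable, contradicting badness. Hence $T_i \subseteq L(v_i, w_1)$ for every $i$ and there is a common $(k-1)$-set $S$ with $L(v_i, w_1) = S \sqcup T_i$; the bad coloring $f'$ similarly produces sets $T_i'$ and a common $(k-1)$-set $S'$. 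Because the $L(v_i, u_j)$ are pairwise disjoint and $|T_i|=a$, the set $T_i$ meets each $L(v_i, u_j)$ in exactly one point, and $f(v_i, u_j)$ is forced to be that point. Consequently $f$ is completely determined by $S$, so the lemma reduces to proving $S = S'$.

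The heart of the argument — and the only place I use $k \geq a+1$ — is to rule out $S \neq S'$. Suppose for contradiction that $\alpha \in S' \setminus S$. Since $S' \subseteq L(v_i, w_1)$ for every $i$ and $\alpha \notin S$, we have $\alpha \in L(v_i, w_1) \setminus S = T_i$, so $\alpha$ is used by $f$ at some vertex $(v_i, u_{j_i})$, where $j_i$ is the unique index with $\alpha \in L(v_i, u_{j_i})$ (unique by disjointness). Partition $[n]$ by $I_j = \{i : j_i = j\}$. Since $f$ is a proper $L$-coloring of the copy of $M$ at $u_j$ and assigns color $\alpha$ to every $(v_i, u_j)$ with $i \in I_j$, each $I_j$ is an independent set in $M$. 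Thus $I_1, \ldots, I_a$ exhibit a proper $a$-coloring of $M$, contradicting $\chi(M) = k \geq a+1$. Therefore $S = S'$, which gives $T_i = T_i'$ for every $i$, and hence $f = f'$.

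The main obstacle I anticipate is spotting this partition-and-chromatic-number argument. Everything else is routine once one sees the right picture: a discrepancy between $S$ and $S'$ would force one single color to be reused by $f$ across all $a$ factor copies of $M$, and the condition $\chi(M) \geq a+1$ is precisely what forbids such uniform reuse. The setup steps (reducing to constant $(k-1)$-assignments via Proposition~\ref{pro: obvious}(ii), and recovering $f$ from $S$ via the disjointness hypothesis) are essentially bookkeeping.
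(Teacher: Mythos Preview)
Your proof is correct and follows essentially the same route as the paper's: both arguments reduce to finding a color that one bad coloring uses at every vertex $v_i$ (in some copy $M_{u_{j_i}}$), which then partitions $V(M)$ into at most $a$ independent sets and contradicts $\chi(M)=k\geq a+1$. Your organization is slightly cleaner---you first isolate the observation that a bad coloring $f$ is determined by its residual set $S$, reducing the problem to $S=S'$, whereas the paper starts from a vertex where $c$ and $c'$ differ and works toward the same partition---but the key idea is identical.
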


\begin{proof}
Throughout this proof for each $j \in [a]$, we let $M_j$ be the copy of $M$ corresponding to $u_j$.  Furthermore, let $M^*$ be the copy of $M$ corresponding to $w_1$.  Suppose for the sake of contradiction that there exist two distinct proper $L$-colorings, $c$ and $c'$, of $H[ \bigcup_{j=1}^a V_{u_i}]$ that are bad for $M^*$.  Since $M$ is strong $k$-chromatic-choosable, the list assignments:
$$L'(v_i,w_1) = L(v_i,w_1) - \{c(v_i,u_j) : j \in [a] \} \; \; \text{and} \; \; L''(v_i,w_1) = L(v_i,w_1) - \{c'(v_i,u_j) : j \in [a] \}$$
for $i \in [n]$ are both constant $(k-1)$-assignments for $M^*$.  Now, for $j \in [a]$, let $c_j$ be the proper $L$-coloring for $M_j$ obtained by restricting the domain of $c$ to $V(M_j)$, and let $c_{a+j}$ be the proper $L$-coloring for $M_j$ obtained by restricting the domain of $c'$ to $V(M_j)$.  Since $c$ and $c'$ are different, we may assume without loss of generality that $c_1(v_m,u_1) \neq c_{a+1}(v_m,u_1)$ for some $m \in [n]$.  Suppose $c_{a+1}(v_m,u_1)=b$.  Since $L(v_m,u_1), L(v_m,u_2), \ldots, L(v_m,u_a)$ are pairwise disjoint, we know that:
$$ b \notin \bigcup_{j=2}^a L(v_m,u_j) \; \; \text{and} \; \; b \notin \{c_j(v_m,u_j) : j \in [a] \}.$$
Let $A$ be the set of $(k-1)$ colors that $L'$ assigns to all the vertices in $M^*$, and let $B$ be the set of $(k-1)$ colors that $L''$ assigns to all the vertices in $M^*$.  We know that for $i \in [n]$, $L(v_i,w_1) = A \bigcup \{c_j(v_i,u_j) : j \in [a] \}.$ So, for $i \in [n]$,
\begin{equation} \label{eq: 1}
B = \left (A \bigcup \{c_j(v_i,u_j) : j \in [a] \} \right ) - \{c_{a+j}(v_i,u_j) : j \in [a] \}.
\end{equation}
Since $|B|=k-1$, $b \notin \{c_j(v_m,u_j) : j \in [a] \}$, and equation~(\ref{eq: 1}) holds for $i=m$,  it must be the case that $b \in A$.  It immediately follows that $b \in \{c_{a+j}(v_i,u_j) : j \in [a] \}$ for each $i \in [n]$.  To see why, note that if this did not hold we would have that some of the lists obtained from $L''$ would contain $b$ and some would not.

Now, for $j \in [a]$, let
$$C_j = \{ v_l : c_{a+j}(v_l,u_j) = b \}.$$
Note $C_1, \ldots, C_a$ are pairwise disjoint because if $C_r$ and $C_s$ both contained some vertex $v_p$ and $r \neq s$, then $b \in L(v_p,u_r) \cap L(v_p, u_s)$ which is a contradiction.  Since $b \in \{c_{a+j}(v_i,u_j) : j \in [a] \}$ for each $i \in [n]$, we have that $V(M) = \bigcup_{j=1}^a C_j$.  Since $c_{a+j}$ colors all the vertices in $V(M_j)$ with first coordinate in $C_j$ with the color $b$, we know that $C_j$ is an independent set of vertices in $M$.  Thus, $\{C_1, \ldots, C_a \}$ is a partition of $V(M)$ into $a$ independent sets.  This implies that $k=\chi(M) \leq a$ which is a contradiction.  This completes the proof.
\end{proof}

We now restate and prove Theorem~\ref{thm: sccexact}.

\begin{customthm}{\bf \ref{thm: sccexact}}
If $M$ is strongly chromatic-choosable and $\chi(M) \geq a + 1$, then $f_a(M) = (P_\ell(M, \chi_\ell(M) + a - 1))^a.$
\end{customthm}

\begin{proof}
We know by Theorem~\ref{thm: generalupper}, $f_a(M) \leq (P_\ell(M, \chi_\ell(M) + a - 1))^a$.  Suppose that $M$ is strong $k$-chromatic-choosable with $k \geq a+1$ and $H= M \square K_{a,b}$.  To prove the desired, we must show that if $b < (P_\ell(M,k+a-1))^a$, then $\chi_\ell(H) < k+a$.

Let $t = P_\ell(M,k+a-1)$, $M_i$ be the copy of $M$ corresponding to $u_i$, and $M'_j$ be the copy of $M$ corresponding to $w_j$.  We assume that $b < t^a$, and we let $L$ be an arbitrary $(k+a-1)$-assignment for $H$.  To prove the desired, we will show that there is a proper $L$-coloring of $H$.  By Lemma~\ref{lem: disjointlists}, we may assume the lists $L(v_i,u_1), L(v_i,u_2), \ldots, L(v_i,u_a)$ are pairwise disjoint for each $i \in [n]$.  For each $j \in [a]$, there are clearly at least $t$ distinct proper $L$-colorings of $M_j$.  This implies that there are at least $t^a$ proper $L$-colorings of $H[\bigcup_{j=1}^a V_{u_j}]$.  Let $\mathcal{C}$ be the set of distinct proper $L$-colorings of $H[\bigcup_{j=1}^a V_{u_j}]$ (we know $|\mathcal{C}| \geq t^a$).

By Lemma~\ref{lem: badscc} we know that for each $d \in [b]$, there is at most one coloring in $\mathcal{C}$ that is bad for $M'_d$. Since $b < t^a \leq |\mathcal{C}|$, there must be some $f \in \mathcal{C}$ that is not a bad coloring for any of: $M'_1, \ldots, M'_b$.  Lemma~\ref{lem: badcolor} then implies there is a proper $L$-coloring of $H$.
\end{proof}

Theorem~\ref{thm: sccexact} along with the fact that the list color function of an odd cycle (resp. complete graph) is equal to the chromatic polynomial of the odd cycle (resp. complete graph) for each natural number immediately yields the following corollary.

\begin{customcor}{\bf \ref{cor: sccexact}}
The following statements hold.
\\
(i)  For any $l \in \N$, $f_2(C_{2l+1})= (P_\ell(C_{2l+1},4))^2 = (3^{2l+1}-3)^2 = 9(9^l-1)^2$.
\\
(ii) For $n \in \N$ satisfying $n \geq a+1$, $f_a(K_n)= (P_\ell(K_n,n+a-1))^a =   \left( \frac{(n+a-1)!}{(a-1)!} \right)^a$.
\end{customcor}

Notice that Corollary~\ref{cor: sccexact} Statement~(ii) shows that the bound in Theorem~\ref{thm: generalupper} is tight for any $a \in \N$.  Suppose $M$ is a strongly chromatic-choosable graph.  At this stage of the paper, we have $f_a(M)$ exactly in terms of the list color function of $M$ when $\chi(M) \geq a+1$.  When $\chi(M) < a+1$ we only have an upper bound on $f_a(M)$ in terms of the list color function of $M$ (by Theorem~\ref{thm: generalupper}).  We will now turn our attention to proving Theorem~\ref{thm: scclower} which will give us a lower bound on $f_a(M)$ in terms of the list color function of $M$ when $\chi(M) < a+1$.  We begin with a lemma.

\begin{lem} \label{lem: scclower}
Suppose $M$ is strong $k$-chromatic-choosable and $H= M \square K_{a, 1}$ where $k < a+1$.  Let $L$ be a $(k+a-1)$-assignment for $H$ such that the lists $L(v_i,u_1), L(v_i,u_2), \ldots, L(v_i,u_a)$ are pairwise disjoint for each $i \in [n]$.  Let $\mathcal{B}$ be the set of proper $L$-colorings of $H[ \bigcup_{j=1}^a V_{u_j}]$ that are bad for the copy of $M$ corresponding to $w_1$.  Then, $|\mathcal{B}| \leq 2^{k-1}$.
\end{lem}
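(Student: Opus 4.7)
My approach is to set up an injection $c \mapsto A_c$ from $\mathcal{B}$ into a family of $(k-1)$-subsets of $L(v_1, w_1)$, and then bound the number of admissible such $A$ using the list structure at $v_1$ together with the hypothesis $k < a+1$. First, for any $c \in \mathcal{B}$, the induced assignment on the copy of $M$ corresponding to $w_1$ given by $L'(v_i,w_1) = L(v_i,w_1) - \{c(v_i,u_j) : j \in [a]\}$ satisfies $|L'(v_i,w_1)| \geq (k+a-1)-a = k-1$ for every $i$. Since no proper $L'$-coloring exists, Proposition~\ref{pro: obvious}(ii) forces $L'$ to be a constant $(k-1)$-assignment; let $A_c$ denote its common value. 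The map $c \mapsto A_c$ is injective: for each $i$ the set $L(v_i,w_1) \setminus A_c$ has exactly $a$ elements, and by pairwise disjointness of $L(v_i,u_1),\ldots,L(v_i,u_a)$ it contains exactly one element of each $L(v_i,u_j)$, forcing $c(v_i,u_j)$ to be that unique element of $L(v_i,u_j) \cap (L(v_i,w_1) \setminus A_c)$.

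Next, I would focus on $v_1$ and use pairwise disjointness to decompose
$$L(v_1,w_1) = F \sqcup Y_1 \sqcup \cdots \sqcup Y_a,$$
where $Y_j := L(v_1,u_j) \cap L(v_1,w_1)$ and $F := L(v_1,w_1) \setminus \bigcup_{j=1}^a L(v_1,u_j)$; in particular $|F| + \sum_{j=1}^a |Y_j| = k+a-1$. An admissible $A = A_c$ satisfies $|Y_j \setminus A| = 1$ for each $j \in [a]$, and combined with $|A| = k-1$ a direct cardinality count forces $F \subseteq A$. Hence admissible $A$'s are in bijection with choices of one element to exclude from each $Y_j$, giving $\prod_{j=1}^a |Y_j|$ candidates in total (the product being $0$, and hence $\mathcal{B} = \emptyset$, whenever some $|Y_j|=0$).

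Finally, I would maximize $\prod_{j=1}^a |Y_j|$ subject to $|Y_j| \geq 1$ integer and $\sum_j |Y_j| \leq k+a-1$. Writing $y_j := |Y_j| - 1 \geq 0$ yields $\sum_j y_j \leq k-1$ over $a$ coordinates, and the hypothesis $k < a+1$ ensures $a \geq k-1$, so the budget of $k-1$ units can be spread across distinct coordinates. A one-step exchange argument (replacing $(y_j,y_{j'})=(s,0)$ by $(s-1,1)$ strictly increases $\prod(1+y_j)$ whenever $s \geq 2$) shows the maximum is attained with $k-1$ of the $y_j$'s equal to $1$ and the rest equal to $0$, giving $\prod_{j=1}^a |Y_j| \leq 2^{k-1}$. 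Combined with the injectivity of $c \mapsto A_c$, this yields $|\mathcal{B}| \leq 2^{k-1}$. I expect the main obstacle to be the first step---distilling ``$c$ is bad'' into the bijective correspondence between $\mathcal{B}$ and a combinatorially explicit set of $(k-1)$-subsets of $L(v_1,w_1)$---since once that is set up, the counting and optimization are essentially one-liners.
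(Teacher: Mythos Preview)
Your proposal is correct and follows essentially the same approach as the paper. The only cosmetic difference is that you encode each $c \in \mathcal{B}$ by the constant $(k-1)$-list $A_c \subseteq L(v_1,w_1)$, whereas the paper encodes it by the tuple $T(c) = (c(v_1,u_1),\ldots,c(v_1,u_a)) \in \prod_j X_j$ with $X_j = L(v_1,u_j)\cap L(v_1,w_1) = Y_j$; these two encodings determine one another, your reconstruction-based injectivity argument is a slightly cleaner version of the paper's contradiction argument, and the final product bound $\prod_j |Y_j| \leq 2^{k-1}$ and its optimization are identical.
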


\begin{proof}
We are done if $\mathcal{B}$ is empty.  So we suppose $\mathcal{B}$ has at least one element.  For $j \in [a]$, let $X_j = \{ c : c \in L(v_1, w_1) \cap L(v_1, u_j) \}$ and $x_j = |X_j|$.  Note that $X_1, X_2, \ldots, X_a$ must be pairwise disjoint.  We claim that
$$ |\mathcal{B}| \leq \left |\prod_{j=1}^a X_j \right | = \prod_{j=1}^a x_j$$
where $\prod_{j=1}^a X_j$ is the Cartesian product of the sets: $X_1, X_2, \ldots, X_a$.  Now, suppose that $f \in \mathcal{B}$.  Then, for $i \in [n]$, we know that the list assignment $L'$ given by
$$L'(v_i,w_1) = L(v_i,w_1) - \{ f(v_i, u_j) : j \in [a] \}$$
is a constant $(k-1)$-assignment for the copy of $M$ corresponding to $w_1$.  This implies that $\{f(v_1, u_1), f(v_1, u_2), \ldots, f(v_1, u_a) \}$ is a set of size $a$ that is completely contained in $L(v_1, w_1)$.   Thus,
$$(f(v_1, u_1), f(v_1, u_2), \ldots, f(v_1, u_a)) \in \prod_{j=1}^a X_j.$$
So, if for each $f \in \mathcal{B}$, we let $T(f) = (f(v_1, u_1), f(v_1, u_2), \ldots, f(v_1, u_a))$, we see that $T$ is a function from $\mathcal{B}$ to $\prod_{j=1}^a X_j$.  In order to prove the desired, we will show that $T$ is injective.  For the sake of contradiction, suppose $f$ and $g$ are distinct colorings in $\mathcal{B}$ such that $T(f) = T(g)$.  For each $i \in [n]$ let $L'$ and $L''$ be the list assignments for the copy of $M$ corresponding to $w_1$ given by
\begin{align*}
&L'(v_i,w_1) = L(v_i,w_1) - \{ f(v_i, u_j) : j \in [a] \} \; \; \text{and} \\
&L''(v_i,w_1) = L(v_i,w_1) - \{ g(v_i, u_j) : j \in [a] \}.
\end{align*}
We know that $L'$ and $L''$ are both constant $(k-1)$ assignments.  Since $T(f)=T(g)$, $L'(v_1,w_1) = L''(v_1, w_1)$ which immediately implies $L'$ and $L''$ assign some list, $A$, of $(k-1)$ colors to every vertex in the copy of $M$ corresponding to $w_1$.  Since $f \neq g$ there are constants, $r$ and $t$, such that $f(v_r, u_t) \neq g(v_r, u_t)$.  Since $L'$ and $L''$ are constant $(k-1)$ assignments assigning $A$ to every vertex in the copy of $M$ corresponding to $w_1$, we know that $\{ f(v_r, u_j) : j \in [a] \} = \{ g(v_r, u_j) : j \in [a] \}$.  Since these two sets must have size $a$, $f(v_r, u_t) = g(v_r, u_p)$ for some $p \neq t$.  This however contradicts the fact that $L(v_r, u_t)$ and $L(v_r, u_p)$ are disjoint.  Thus, $T$ is injective, and we have $|\mathcal{B}| \leq \left |\prod_{j=1}^a X_j \right |$.

To finish the proof, we must show that $\prod_{j=1}^a x_j \leq 2^{k-1}$.  Notice that each $x_j$ is a nonnegative integer such that $ \sum_{j=1}^a x_j \leq k+a-1$.  Under these conditions, the maximum possible value of $\prod_{j=1}^a x_j $ is achieved when $ \sum_{j=1}^a x_j = k+a-1$ and each $x_j$ equals $\lfloor (k+a-1)/a \rfloor= 1$ or $\lceil (k+a-1)/a \rceil = 2$; that is, when $(k-1)$ of $x_1, x_2, \ldots, x_a$ are 2 and the rest are 1.
\end{proof}

It is fairly easy to show that the bound in Lemma~\ref{lem: scclower} is tight for each $k \geq 2$.  For example, suppose $G = K_3$, $V(G) = \{v_1, v_2, v_3 \}$, and $H = G \square K_{3,1}$.  Suppose $L$ is the 5-assignment for $H$ that assigns: $\{1,2,3,4,5 \}$ to $(v_i,w_1)$ for each $i \in [3]$, $\{1,2,6,7,8 \}$ to $L(v_1,u_1)$, $L(v_3,u_2)$, and $L(v_2,u_3)$, $\{3,4,9,10,11 \}$ to $L(v_2,u_1)$, $L(v_1,u_2)$, and $L(v_3,u_3)$, and $\{5,12,13,14,15 \}$ to $L(v_3,u_1)$, $L(v_2,u_2)$, and $L(v_1,u_3)$.  It is easy to see that $L$ satisfies the hypotheses of Lemma~\ref{lem: scclower}, and there are exactly 4 proper $L$-colorings of $H[ \bigcup_{j=1}^3 V_{u_j}]$ that are bad for the copy of $G$ corresponding to $w_1$.    We are now ready to restate and prove Theorem~\ref{thm: scclower}.

\begin{customthm}{\bf \ref{thm: scclower}}
Suppose $M$ is a strong $k$-chromatic-choosable graph.  Then,
$$ \frac{(P_\ell(M,k+a-1))^a}{2^{k-1}} \leq f_a(M).$$
\end{customthm}

\begin{proof}
Suppose that $M$ is strong $k$-chromatic-choosable and $H= M \square K_{a,b}$.  By Theorem~\ref{thm: sccexact}, the result is obvious when $k \geq a + 1$.  So, we assume $k < a+1$.  Let $t= P_\ell(M,k+a-1)$, and let $M_i$ be the copy of $M$ in $H$ corresponding to $w_i$.  We assume $b < t^a/2^{k-1}$, and we let $L$ be an arbitrary $(k+a-1)$-assignment for $H$.  To prove the desired, we will show there is proper $L$-coloring for $H$.  By Lemma~\ref{lem: disjointlists}, we may assume that the lists $L(v_i,u_1), L(v_i,u_2), \ldots, L(v_i,u_a)$ are pairwise disjoint for each $i \in [n]$.  Let $\mathcal{C}$ be the set of proper $L$-colorings of $H[ \bigcup_{j=1}^a V_{u_j}]$.  Clearly, $t^a \leq |\mathcal{C}|$.  For $d \in [b]$ let $C_d$ be the subset of $\mathcal{C}$ that contains all the proper $L$-colorings of $H[ \bigcup_{j=1}^a V_{u_j}]$ that are bad for $M_d$.  By Lemma~\ref{lem: scclower} we have that $|C_d| \leq 2^{k-1}$.  Since $b < t^a/2^{k-1}$,
$$ \sum_{d=1}^b |C_d| \leq b(2^{k-1}) < t^a \leq |\mathcal{C}|.$$
Thus, $\bigcup_{d=1}^{b} C_d$ must be a proper subset of $\mathcal{C}$, and we can find an $f \in \mathcal{C} - \bigcup_{d=1}^{b} C_d$.   Lemma~\ref{lem: badcolor} then implies there is a proper $L$-coloring of $H$.
\end{proof}

\end{document}